\documentclass[11pt]{article}
\usepackage[utf8]{inputenc}
\usepackage[T1]{fontenc}
\usepackage[english]{babel}
\usepackage{graphicx}
\usepackage{amssymb, amsmath, amsthm, mathrsfs, parskip, amsthm, enumerate, mathtools, subfig, comment}
\usepackage[toc,page]{appendix}
\usepackage[margin=0.3 in]{geometry}
\newgeometry{includefoot,left=2.5cm,right=2.5cm, bottom=3cm,top=3cm}
\usepackage{comment}

\newcommand{\R}{\mathbb{R}}

\newcommand{\dist}{\mathrm{dist}}

\providecommand{\Linf}{\mathcal{L}_{\infty}}
\providecommand{\Lpos}{\mathcal{L}_{\infty}^+}
\providecommand{\Lneg}{\mathcal{L}_{\infty}^-}
\providecommand{\Rn}{\mathbb{R}^n}
\providecommand{\Rnn}{\mathbb{R}^{n+1}}
\providecommand{\Ns}{N \times I}
\providecommand{\Rnp}{D_{\Omega}}

\providecommand{\On}{\Omega_E}
\providecommand{\Oe}{\Omega_{E}^{t_1,t_2}}

\providecommand{\cyl}{\mathcal{Q}_{T}}
\providecommand{\RnpO}{D_{\Omega}\setminus \Omega}

\providecommand{\ug}{\mathcal{U}_g}
\providecommand{\ups}{\overline{H}_g}

\providecommand{\lps}{\underline{H}_g}
\providecommand{\lcg}{\mathcal{L}_g}

\newtheorem{thm}{Theorem}[section]
\newtheorem{lem}[thm]{Lemma}
\newtheorem{prop}[thm]{Proposition}
\newtheorem{cor}[thm]{Corollary}
\newtheorem{rmk}[thm]{Remark}
\newtheorem{exm}[thm]{Example}

\numberwithin{equation}{section}

\theoremstyle{definition}
\newtheorem{definition}[thm]{Definition}

\title{On an evolutionary equation involving the nonlocal infinity Laplacian}

\author{Frida Fejne}
\date{}

\begin{document}

\maketitle
\thispagestyle{empty}

\begin{abstract}
\noindent We study the evolutionary equation
\begin{align*} 
 \frac{\partial u}{\partial t} = (\mathcal{L}_{\infty})^{\alpha}u   \textup{ in }  \Omega,
\end{align*}
where $(\mathcal{L}_{\infty})^{\alpha}u$ denotes a nonlocal infinity Laplacian acting on the function $u$, $0<\alpha \leq 1$ and $\Omega$ is a bounded open set in $\mathbb{R}^{n+1}$. We prove existence and uniqueness using Perron's method for the Dirichlet problem when $\Omega$ is a cylinder and $0<\alpha<1$.
\end{abstract}
\section{Introduction}
In this paper, we study Perron's method for the evolutionary problem  
\begin{align} \label{eq:Luf} 
 \frac{\partial u}{\partial t} = (\mathcal{L}_{\infty})^{\alpha}u   \textup{ in }  \Omega,
\end{align}
where $0<\alpha \leq 1$ and
\begin{equation} \label{eq:infinitylap}
(\mathcal{L}_{\infty})^{\alpha}u(x,t) = \sup_{y \in \Rn} \frac{u(y,t)-u(x,t)}{|x-y|^{\alpha}} + \inf_{y \in \Rn} \frac{u(y,t)-u(x,t)}{|x-y|^{\alpha}},
\end{equation}
 is a nonlocal version of the infinity Laplace operator. This amounts to considering the corresponding Dirichlet problem
\begin{align} \label{eq:Luf2} 
\begin{cases}
 \frac{\partial u}{\partial t} = (\mathcal{L}_{\infty})^{ \alpha}u &  \textup{ in }  \Omega, \\
 u = g &  \textup{ on }  \Omega^c,
\end{cases}
\end{align}
for a bounded and continuous boundary function $g$. Here, $\Omega$ denotes a bounded open set in $\mathbb{R}^{n+1}$.

We show the existence of viscosity solutions to \eqref{eq:Luf} for $0 < \alpha <1$, by constructing the upper and lower Perron solutions of \eqref{eq:Luf2} and relating the boundary behavior to the existence of barriers. That is, we prove that when there exist barriers for all boundary points, except possibly for a few specific points $(x_0,t_0) \in \partial \Omega$, then the upper and lower Perron solutions coincide and is a viscosity solution to \eqref{eq:Luf2}.  This is the content of the main theorem of this paper, see Theorem \ref{thm:existence}. The uniqueness follows from the comparison principle, which we also prove. In the case where $\Omega$ is a cylinder, we construct explicit barriers for the boundary points on the ``bottom'' and the ``side'' of the cylinder. This is done for $0<\alpha \leq 1$. 

The paper is organized as follows. In Section 2 we introduce some notation, basic definitions, and fundamental results. In Section \ref{sec:Perron} we construct viscosity solutions to \eqref{eq:Luf} using Perron's method. This is followed by Section \ref{sec:barriers}, in which we study barriers for the boundary value problem \eqref{eq:Luf2} and show the existence of solutions in the case where $\Omega$ is a cylinder, and $0<\alpha<1$.
\subsection{Background and related work}
The operator in \eqref{eq:infinitylap} is closely related to the non-local fractional $p$-Laplace operator
\begin{equation*}\label{eq:fraclap}
(-\mathcal{L}_p)^{\alpha} u(x) = \lim_{\epsilon \to 0} \int_{\Rn \setminus B_{\epsilon}(x)} 2\frac{|u(y)-u(x)|^{p-2}(u(y)-u(x))}{|x-y|^{\alpha p}} \ dy,
\end{equation*}
where $\alpha \in (0,1]$. If we for instance consider
\begin{align*}
     \begin{cases}
                (-\mathcal{L}_p)^{\alpha} u  = 0 & \textup{ in } \Omega, \\
                    u =g & \textup{ on } \Rn \setminus \Omega, 
     \end{cases}
\end{align*}
for $0 < \alpha \leq 1$, and let $p \to \infty$, the limiting problem becomes
\begin{align}\label{eq:Holderinf}
     \begin{cases}
                 (\Linf)^{\alpha} u = 0 & \textup{ in } \Omega, \\
                    u =g & \textup{ on } \Rn \setminus \Omega. 
     \end{cases}
\end{align}
For more on the limiting equations in different settings when sending $p$ to infinity, see \cite{CLM2012, FL2016, LL2012}. We would like to emphasize that there are no standard definitions of the nonlocal fractional $p$-Laplace operator and the nonlocal infinity Laplace operator, see for example \cite{BCF20122, CJ2017,  STD2018} for some different definitions of the fractional $p$-Laplace operator, and \cite{BCF2012} for another definition of the nonlocal infinity Laplace operator. For more related work on the these operators, we refer the reader to \cite{ CRY2010, CS2007, SR2019, L2006, L2016, PSSW2011} and the references therein. In \cite{JK2006} Juutinen and Kawohl studied basic properties of the parabolic counter part of the infinity Laplace equation. Among other things they established the existence of viscosity solutions through approximating equations.
     For other evolutionary problems involving different versions of the infinity Laplacian, see, e.g., \cite{AJK2009,CW2003}. In \cite{HL2016}, Hynd and Lindgren investigated properties of weak and viscosity solutions of the nonlocal equation
     \begin{equation*}
         | v_t|^{p-2}v_t +(-\Delta_p)^s v=0,
     \end{equation*}
     where $(-\Delta_p)^s$ denotes the fractional $p$-Laplace operator. For more on the fractional heat equation and nonlocal evolutionary problems, see e.g. \cite{BPSV2014,   BSV2017, LN2023, NS2016, V2018}. In \cite{TEEV2023} del Teso et al. studied the evolutionary initial value problem
    \begin{align*}
        \begin{cases}
            \partial_t u= \Delta^s_{\infty} u& \textup{for  } x \in \Rn, t >0,\\
        u(x,0) =u_0(x)& \textup{for  } x \in \Rn,
    \end{cases}
\end{align*} 
where 
\begin{equation*}
\Delta^s_{\infty} \phi := C_s \sup_{|y|=1} \inf_{|\tilde{y}|=1}\int_0^{\infty} (\phi(x + \eta y) + \phi(x - \eta \tilde{y}) - 2 \phi(x)) \frac{d \eta}{\eta^{1+2s}},
\end{equation*}
with $s \in (1/2,1)$ and $n\geq 2$. They developed an existence theory for viscosity solutions and showed a comparison theorem between classical and viscosity solutions. Note, however, that this version of the nonlocal infinity Laplace operator is quite different from the one studied in this paper.

When it comes to proving the existence of viscosity solutions to a partial differential equation that is not in divergence form, it is common to consider either approximating sequences and the vanishing viscosity method or Perron's method, which relies on the existence of a comparison principle. With Perron's method, the aim is to construct a solution that attains some prescribed boundary function $g$. In general, this is not possible. However, one may often construct the upper and the lower Perron solutions that both attain the boundary values in some sense. It is of great interest to investigate whether the upper and lower Perron solutions coincide, and under what conditions they actually do attain the right boundary values. The method was developed by Perron in 1923 in order to solve the Dirichlet boundary value problem for Laplace's equation \cite{P1923}. In 1929, Sternberg observed that the method could be extended to the heat equation \cite{S1929}. Perron's method has also been studied in the degenerate and singular setting of the $p$-parabolic equation, see for instance \cite{BBUP2015,KL1996} and it
can also be used for equations involving integro-differential operators, and other nonlocal operators, as long as it is possible to prove a suitable comparison principle. For some references of comparison principles for viscosity solutions, see  \cite{ BI2008, CS2009, CIL1992}, and for Perron's method for nonlocal problems, see for example \cite{BCI2008, bbk2024perron,   I1987, I1989, K2004, K2005, KKP2017, LL2017,  M2017}.

\subsection*{Acknowledgments}
The author would like to thank Erik Lindgren for suggesting this problem as well as for many interesting and productive discussions. The research was supported by VR Grant 2016-03639.

\section{Notation, definitions and basic properties}
This section contains some general definitions and basic results. In Section \ref{subsec:generalnotation} we define the notation that we will use throughout the paper. This section is followed by Section \ref{sec:viscositysol} where we define viscosity solutions to the equation in \eqref{eq:Luf}. In Section \ref{sec:comparisonprinciple} we prove the comparison principle for viscosity sub- and supersolutions to \eqref{eq:Luf2} for $0 <\alpha<1$.

\subsection{General notation}\label{subsec:generalnotation}
Throughout this paper, $\Omega$ will denote a bounded open set in $\Rn \times \R$, and $\Rnp=\Rn \times [T_0,T]$, where
\begin{equation*}
    T_0 = \inf\{t \in \R: (x,t) \in \Omega\}, \quad T = \sup\{t \in \R: (x,t) \in \Omega\}.
\end{equation*}
By $\mathcal{B}_r(x_0,t_0)$ we denote the open ball with radius $r$ that is centered at the point $(x_0,t_0) \in \mathbb{R}^{n+1}$. We use $B_r(x_0,t_0)$ to denote the $n$ dimensional open ball with radius $r$, centered at $(x_0,t_0)$, that is 
$$
B_r(x_0,t_0) = \{(x,t_0) \in \Rn: |x-x_0|<r \}.
$$
%Furthermore, for $\xi_0 = (x_0,t_0)$ we define the half ball $B^-_r(\xi_0)$ as
%\begin{equation}\label{eq:halfball}
 %   B^-_r(\xi_0)= \{(x,t) \in B_r(\xi_0):t < t_0 \}.
%\end{equation}
We define the distance between a point $x \in \Rn$ and the set $E \subset \Rn$ as
\begin{equation*}
    d(x, E) = \inf_{\tilde{x} \in E} |x-\tilde{x}|.
\end{equation*}
For an open and bounded set $E \subset \Rn$ we define the cylinder $\cyl$ as
\begin{equation}\label{eq:cyl}
    \cyl = E \times (T_0,T),
\end{equation}
where $T_0,T \in \R$, and we define 
\begin{equation}\label{eq:Nr}
   Q_r(x,t)=B_r(x,t) \times (t-r,t+r).
\end{equation}
When it is clear from context what we mean, we sometimes omit the point $(x,t)$ and simply write $Q_r$. 

We define the projection of $\Omega \in \Rnn$ onto $\Rn$ as
\begin{equation}\label{eq:projx}
    \On = \{x \in \Rn:(x,t) \in \Omega \textup{ for some } t \in \R \}.
\end{equation}
We are going to denote the the upper semicontinuous envelope of a function $u$ as
\begin{equation}\label{eq:u_upp}
    u^*(x,t) := \lim_{\epsilon \to 0} \sup_{(y,s) \in B_{\epsilon}(x,t)} u(y,s),
\end{equation}
and the lower semicontinuous envelope of a function $v$ as
\begin{equation}\label{eq:u_low}
    v_*(x,t) := \lim_{\epsilon \to 0} \inf_{(y,s) \in B_{\epsilon}(x,t)} v(y,s).
\end{equation}
When we work with the operator in \eqref{eq:infinitylap} we decompose it as
$$
(\Linf)^{\alpha} u = (\Lpos)^\alpha u + (\Lneg)^{\alpha} u,
$$
where
\begin{align*}
(\Lpos)^\alpha u(x,t) = \sup_{y \in \Rn, y \neq x} \frac{u(y,t)-u(x,t)}{|x-y|^{\alpha}},
\end{align*}
and
\begin{align*}
(\Lneg)^\alpha u(x,t) = \inf_{y \in \Rn, y \neq x} \frac{u(y,t)-u(x,t)}{|x-y|^{\alpha}}.
\end{align*}
 We usually omit the supscript $\alpha$ and sometimes we omit the subscript $y \neq x$.

%Furthermore, for a point $x_0 \in \Rn$ we define the set of points $(x_0,t)$ that belong to $\Omega$ for some $t$ as
%\begin{equation}\label{eq:projt_x0}
 %   \Omega_t^{x_0} = \{(x_0,t) \in \mathbb{
%R}^{n+1}: (x_0,t) \in \Omega \ \textup{ for some }t \in \R\}.
%\end{equation}
\begin{comment}
Given two points $t_1$ and $t_2$ we denote the set of points in $\Omega$ for which $t_1 < t < t_2$ as
\begin{equation}\label{eq:projPartomega}
    \Omega(t_1,t_2)  = \{(x,t) \in \mathbb{R}^{n+1} :(x,t) \in \Omega \textup{ for some }  t_1<t <t_2\}.
\end{equation}
The projection of $\Omega(t_1,t_2)$ onto the $x$ axis is defined as
\begin{align}\label{eq:projxt}
     \Oe = \{x \in \Rn :(x,t) \in \Omega(t_1,t_2) \textup{ for some } t \in \R\}. 
\end{align}
\end{comment}

\subsection{Viscosity solutions}\label{sec:viscositysol}
\begin{definition}\label{def:viscsol}
    Let $K \in \R$. A function  $u: \Rnp \to \R$ is a viscosity subsolution of 
    \begin{equation}\label{eq:viscsol}
        \frac{\partial u}{\partial t}-\Linf u = K,
    \end{equation}
    in $\Omega$ if the following conditions hold:
    \begin{itemize}
        \item $u$ is upper semicontinuous in $\Omega$ and satisfies
    \begin{equation}\label{eq:growthcond}
          |u(x,t)| \leq C(1+|x|^{\beta}),  
    \end{equation}
         for all $x \in \Rn$, $t\in [T_0,T]$, and where $C> 0$, $\beta < \alpha$. 
        \item Let $(x_0,t_0) \in \Omega$ and let $\Ns \subset \Omega $ be a neighborhood of $(x_0,t_0)$. If $\varphi \in C^1(\Ns)$, such that $\varphi(x_0,t_0)=u(x_0,t_0)$ and $\varphi \geq u$ in $\Ns$, then
    $$
    \frac{\partial \varphi}{\partial t} \left(x_0,t_0\right)  - \mathcal{L}_{\infty}\varphi_{N}(x_0,t_0) \leq K, 
    $$
    where
\begin{align}\label{eq:varphiN}
     \varphi_{N}(x,t)= \begin{cases}
                      \varphi & \textup{if  } \ (x,t) \in \Ns,\\
                      u & \textup{if  } (x,t) \in \ \Rnp   \setminus \Ns.
            \end{cases}
 \end{align} 
\end{itemize}
We say that $u$ is a viscosity supersolution in $\Omega$ if $-u$ is a viscosity subsolution in $\Omega$. Finally, $u$ is a viscosity solution in $\Omega$ if it is both a viscosity subsolution and a viscosity supersolution.
\end{definition}
\begin{rmk}\label{rmk:posneg}
    We note that due to the growth condition \eqref{eq:growthcond} it follows that $\Lneg u \leq 0$ and $\Lpos u \geq 0$. 
\end{rmk}

Sometimes $N= B_r(x_0,t_0)$ and $I= (t_0-r, t_0+r)$. In that case, we will use the notation
\begin{align}\label{eq:defvarphir}
     \varphi_r(x,t)= \begin{cases}
                      \varphi & \textup{if  } \ (x,t) \in Q_r(x_0,t_0),\\
                      u & \textup{if  } (x,t) \in \ \Rnp   \setminus  Q_r(x_0,t_0),
            \end{cases}
 \end{align}
where $Q_r(x,t)$ is defined in \eqref{eq:Nr}.

\begin{rmk}\label{rmk:strict_touching}
    We note that in Definition \ref{def:viscsol} we may assume that the touching is strict. Because if $u$ is a subsolution and $\varphi \in C^1(\Ns)$ touches $u$ from above at $(x_0,t_0) \in \Ns \subset \Omega$ we can consider the function
    \begin{equation*}
        \varphi^{\epsilon} = \varphi + \epsilon |x-x_0|^4 + |t-t_0|^2,
    \end{equation*}
    for which the touching is strict. Then, since $u$ is a subsolution, it follows that
    $$
    \frac{\partial \varphi}{\partial t}(x_0,t_0) = \frac{\partial \varphi^{\epsilon}}{\partial t}(x_0,t_0) \leq \Linf \varphi^{\epsilon}_{N}(x_0,t_0).
    $$
    Furthermore, we note that 
    \begin{equation*}
        \Linf \varphi_{N}(x_0,t_0) \leq \Linf \varphi^{\epsilon}_{N}(x_0,t_0) \leq \Linf \varphi_{N}(x_0,t_0)+\epsilon (\textup{diam}(_{N}))^{4-\alpha},
     \end{equation*}
   so  $\Linf \varphi^{\epsilon}_{N}(x_0,t_0) \to \Linf \varphi_{N}(x_0,t_0)$ as $\epsilon \to 0$. 
   Similarly, if $u$ is a supersolution and $\varphi \in C^1(\Ns)$ touches $u$ from below at $(x_0,t_0) \in  \Ns \subset \Omega$ we can make the touching strict by considering
        \begin{equation*}
        \varphi_{\epsilon} = \varphi- \epsilon |x-x_0|^4 - |t-t_0|^2.
    \end{equation*}
\end{rmk}
We are also going to define viscosity solutions to the stationary nonlocal infinity Laplace equation.
\begin{definition}\label{def:viscsolspace}
    Let $E \subset \Rn$ be a bounded open set and $\tilde{K} \in \R$. A function  $u: \Rn \to \R$ is a viscosity subsolution of 
    \begin{equation}\label{eq:usubKspace}
        \Linf u = \tilde{K},
    \end{equation}
    in $E \subset \Rn$ if the following conditions hold:
    \begin{itemize}
        \item $u$ is upper semicontinuous in $E$ and satisfies
         \begin{equation}\label{eq:growthcond2}
             |u(x)| \leq C(1+|x|^{\beta}), 
         \end{equation} 
         for some $C> 0$, $\beta < \alpha$. 
        \item let $x_0 \in E$ and let $N  \subset E$ be a neighborhood of $x_0$. If $\varphi \in C^1(N)$ such that $\varphi(x_0)=u(x_0)$ and $\varphi \geq u$ in $N$, then
    $$
    \mathcal{L}_{\infty}\varphi_N(x_0) \geq \tilde{K}, 
    $$
    where
\begin{align*}
     \varphi_N= \begin{cases}
                      \varphi & \textup{if  } x \in N,\\
                      u & \textup{if  } x \in \ \Rn   \setminus N.
            \end{cases}
 \end{align*} 
\end{itemize}
We say that $u$ is a viscosity supersolution in $E$ if $-u$ is a viscosity subsolution in $E$. Finally, $u$ is a viscosity solution in $E$ if it is a viscosity subsolution and a viscosity supersolution.
\end{definition}

Sometimes $N= B_r(x_0)$. In that case, we will use the notation
\begin{align}\label{eq:defvarphirspace}
     \varphi_r= \begin{cases}
                      \varphi & \textup{for  }  x \in B_r(x_0),\\
                      u & \textup{for  } x \in \ \Rn   \setminus  B_r(x_0).
            \end{cases}
 \end{align}
\begin{lem}\label{lem:Linfusc}
    Let $0<\alpha \leq 1$, $u$ be upper semicontinuous, $\Ns \subset \Omega$ and $\varphi \in C^1(\Ns)$. Then $\Linf \varphi_{N}$ is upper semicontinuous in $\Ns$, that is for $(x_0,t_0) \in \Ns$ we have that
    \begin{equation*}
        \limsup_{(x_k,t_k) \to (x_0,t_0)} \Linf \varphi_{N}(x_k,t_k) \leq \Linf \varphi_{N}(x_0,t_0),
    \end{equation*}
    where $\varphi_{N}$ is defined as in \eqref{eq:varphiN}. If $v$ is lower semicontinuous, then $\Linf \varphi_{N}$ is lower semicontinuous in $\Ns$.
\end{lem}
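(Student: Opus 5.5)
We split $\Linf \varphi_N = \Lpos \varphi_N + \Lneg \varphi_N$ and show separately that each piece is upper semicontinuous at $(x_0,t_0)\in \Ns$. Since a sum of two upper semicontinuous functions is upper semicontinuous, this gives the claim. The lower semicontinuous statement follows by applying the result to $-v$, because $\Lpos(-w) = -\Lneg w$ and $\Lneg(-w) = -\Lpos w$. Throughout we use that $\varphi_N$ is upper semicontinuous near $N$ (it equals $u$ outside $N$, and $\varphi\ge u$ with $\varphi$ continuous in $N$). We also use that in a neighborhood of $(x_0,t_0)$ the function $\varphi_N$ coincides with the $C^1$ function $\varphi$. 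Take $Q_{2\rho}(x_0,t_0)\subset \Ns$ and points $(x_k,t_k)\to(x_0,t_0)$ with $(x_k,t_k)\in Q_\rho$.

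\emph{The infimum part.} Here the argument is standard: an infimum of upper semicontinuous functions is upper semicontinuous. Fix $y\neq x_0$. Then
\[
(x,t)\mapsto \frac{\varphi_N(y,t)-\varphi_N(x,t)}{|x-y|^\alpha}
\]
needs care, since $\varphi_N(y,\cdot)$ is only upper semicontinuous in $t$. Instead, we argue directly. Given $\epsilon>0$, pick $y$ with quotient at $(x_0,t_0)$ within $\epsilon$ of $\Lneg\varphi_N(x_0,t_0)$. To avoid upper semicontinuity issues in $t$ at $y$, we may restrict the infimum to $y$ near $x_0$ or use test points, but the cleanest route is to compare $\varphi_N(x_k+ (y-x_0),t_k)$ with $\varphi_N(y,t_0)$. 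Translating $y_k=y+(x_k-x_0)$ keeps $|x_k-y_k|=|x_0-y|$. Then
\[
\Lneg\varphi_N(x_k,t_k)\le \frac{\varphi_N(y_k,t_k)-\varphi(x_k,t_k)}{|x_0-y|^\alpha},
\]
and $\limsup_k \varphi_N(y_k,t_k)\le \varphi_N(y,t_0)$ by upper semicontinuity, while $\varphi(x_k,t_k)\to\varphi(x_0,t_0)$. This gives $\limsup \Lneg\varphi_N(x_k,t_k)\le \Lneg\varphi_N(x_0,t_0)+\epsilon$. If $y$ lies where upper semicontinuity is only known on $\Rnp$, we use that $u$ is upper semicontinuous there (as assumed).

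\emph{The supremum part (main obstacle).} A supremum of upper semicontinuous functions need not be upper semicontinuous, so here we need compactness. For each $k$, choose $y_k\neq x_k$ nearly attaining $\Lpos\varphi_N(x_k,t_k)$ up to $1/k$. The growth condition \eqref{eq:growthcond} with $\beta<\alpha$ forces the quotients to go to $\le 0$ as $|y|\to\infty$, whereas $\Lpos\ge 0$ by Remark \ref{rmk:posneg}. Hence either the $\limsup$ is $0\le \Lpos\varphi_N(x_0,t_0)$ or the $y_k$ stay bounded. Pass to a subsequence $y_k\to \bar y$. There are two cases.

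If $\bar y\neq x_0$, upper semicontinuity of $\varphi_N$ at $(\bar y,t_0)$ together with continuity of the denominator gives
\[
\limsup_k \frac{\varphi_N(y_k,t_k)-\varphi(x_k,t_k)}{|x_k-y_k|^\alpha}\le \frac{\varphi_N(\bar y,t_0)-\varphi(x_0,t_0)}{|x_0-\bar y|^\alpha}\le \Lpos\varphi_N(x_0,t_0).
\]

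If $\bar y=x_0$, then eventually $y_k\in B_\rho$ and $\varphi_N=\varphi$ there. By the mean value theorem the quotient is at most $|\nabla_x\varphi(\xi_k,t_k)|\,|x_k-y_k|^{1-\alpha}$.

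For $\alpha<1$ this tends to $0\le\Lpos\varphi_N(x_0,t_0)$. For $\alpha=1$ it tends to at most $|\nabla_x\varphi(x_0,t_0)|$. In that case we must check that $\Lpos\varphi_N(x_0,t_0)\ge|\nabla_x\varphi(x_0,t_0)|$, which holds by taking $y=x_0+s\nabla_x\varphi/|\nabla_x\varphi|$ with $s\to0$. Handling this near-diagonal case, and the role of $C^1$ regularity there, is the delicate step.
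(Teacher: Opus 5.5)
Your architecture (separate upper semicontinuity of $\Lpos\varphi_N$ and $\Lneg\varphi_N$, compactness of near-maximizers via the growth condition, and a mean-value bound near the diagonal with the check $\Lpos\varphi_N(x_0,t_0)\ge|\nabla_x\varphi(x_0,t_0)|$ for $\alpha=1$) is reasonable. The diagonal estimate is a self-contained substitute for the paper's appeal to Lemma 3.5 of \cite{CLM2012}. The gap is the premise used in both halves: that $\varphi_N$ is upper semicontinuous near $N$.

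This premise fails at points $(y,t)$ with $y\in\partial N$. There $\varphi_N=u$, while just inside $N$ it equals $\varphi\ge u$, so $\varphi_N$ jumps \emph{down} when leaving $N$, which is the wrong direction. Concretely, take $u\equiv 0$, $N\times I=B_1(0)\times(-1,1)$, $\varphi(x,t)=|x|^2$ and $(x_0,t_0)=(0,0)$.
\begin{itemize}
\item In your infimum step, $y=e_1\in\partial N$ is an exact minimizer. For $x_k=-e_1/k$ the translates $y_k=(1-1/k)e_1$ lie in $N$, so $\varphi_N(y_k,t_k)\to 1>0=\varphi_N(e_1,0)$, and your bound only gives $\limsup\le 1$.
\item In your supremum step, the near-maximizers $y_k$ lie in $N$ and accumulate at some $\bar y\in\partial N$. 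Your displayed inequality then reads $1\le 0$.
\end{itemize}

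The repair is the paper's decomposition of the $y$-range into $N$ and $\Rn\setminus N$.

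For the infimum, do not translate at all. For each fixed $y\neq x_0$, $t\mapsto\varphi_N(y,t)$ is upper semicontinuous near $t_0$: it is $u(y,\cdot)$ if $y\notin N$ and continuous if $y\in N$. That is exactly what bounding $\limsup_k$ of the quotient with $y$ fixed requires. So your worry about ``only upper semicontinuous in $t$'' was unfounded, and the translation is what created the problem. This is the paper's argument for $L^-_{N^c}$.

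For the supremum, pass to a subsequence with either all $y_k\in\Rn\setminus N$ or all $y_k\in N$.
\begin{itemize}
\item In the first case, $\Rn\setminus N$ is closed, so $\bar y\notin N$. Only $u$ enters, and its upper semicontinuity gives your inequality; this is the paper's treatment of $L^+_{N^c}$.
\item In the second case, with $\bar y\in\partial N$, you need $\varphi$ continuous up to $\partial N$. You then approximate $\bar y$ from inside $N$ at time $t_0$, which bounds the limit by $\Lpos\varphi_N(x_0,t_0)$ rather than by the quotient with $\varphi_N(\bar y,t_0)$.
\end{itemize}
This boundary continuity is also tacitly required by the paper, through its claimed continuity of $L^\pm_N$. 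It holds in the paper's applications, where $\varphi$ is continuous on $\overline{N}\times\overline{I}$ or $N$ is shrunk to some $Q_r\subset\subset N$.
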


\begin{proof}
    We show the lemma for an upper semicontinuous function $u$. Then we can use the fact that $\Linf \varphi_{N} = - \Linf (-\varphi_{N})$ to see that $\Linf \varphi_{N}$ is lower semicontinuous if $v$ is lower semicontinuous. For $(x,t) \in \Ns$ we define
    \begin{equation*}
        L^+_{N}(x,t) = \sup_{y \in N, y\neq x} \frac{\varphi(y,t)-\varphi(x,t)}{|y-x|^{\alpha}} \quad \textup{and } \quad L^-_{N}(x,t) = \inf_{y \in N, y\neq x} \frac{\varphi(y,t)-\varphi(x,t)}{|y-x|^{\alpha}},
    \end{equation*}
and
    \begin{equation*}
        L^+_{N^c}(x,t) = \sup_{y \in \Rn \setminus N} \frac{u(y,t)-\varphi(x,t)}{|y-x|^{\alpha}} \quad \textup{and } \quad L^-_{N^c}(x,t) = \inf_{y \in \Rn \setminus N} \frac{u(y,t)-\varphi(x,t)}{|y-x|^{\alpha}}.
    \end{equation*}
    Furthermore, we note that
    \begin{align*}
        \Lpos \varphi_{N}(x,t) &= \max(L^+_{N^c}(x,t), (L^+_{N}(x,t)), \\
        \Lneg \varphi_{N}(x,t) &= \min(L^-_{N^c}(x,t), (L^-_{N}(x,t)). 
    \end{align*}
From Lemma 3.5 in \cite{CLM2012} we know that $L^{\pm}_{N}(x,t) \in C(\Ns)$. We are now going to show that $L^{\pm}_{N^c}(x,t)$ are upper semicontinuous. This is just a simple consequence of the fact that $u$ is upper semicontinuous. Since $u$ is upper semicontinuous and $\varphi$ is continuous, the following inequality is true for all $y \in \Rn \setminus N$ and $(x_k,t_k) \to (x_0,t_0)$.
\begin{equation*}
    \limsup_{k \to \infty} L^-_{N^c}(x_k,t_k) \leq \frac{u(y,t_0)-\varphi(x_0,t_0)}{|y-x_0|^{\alpha}}.
\end{equation*}
Therefore,
\begin{equation*}\label{eq:notinfty}
    \limsup_{k \to \infty} L^-_{N^c}(x_k,t_k) \leq L^-_{N^c}(x_0,t_0).
    \end{equation*}
 Thus, $L^-_{N^c}(x,t)$ is upper semicontinuous, and since $\Lneg \varphi(x,t)$ is the minimum of an upper semicontinuous function and a continuous function, it is also upper semicontinuous. 

Next, we turn to $L^+_{N^c}(x,t)$. If $L^+_{N^c}(x,t)$ is not upper semicontinuous there exists a sequence $(x_k,t_k) \to (x_0,t_0)$ such that
\begin{equation}\label{eq:contradiction}
        \lim_{k \to \infty} L^+_{N^c}(x_k,t_k) > L^+_{N^c}(x_0,t_0).
\end{equation}
It follows from Remark \ref{rmk:posneg}  and \eqref{eq:contradiction} that for large $k$, $ L^+_{N^c}(x_k,t_k)$ is attained for some $y_k \in \Rn \setminus N$, and $y_k \to \tilde{y}$. It follows from the upper semicontinuity of $u$ and the definition of $L^+_{N^c}(x_0,t_0)$ that
\begin{equation*}
     \lim_{k \to \infty} L^+_{N^c}(x_k,t_k) \leq \frac{u(\tilde{y},t_0)-\varphi(x_0,t_0)}{|\tilde{y}-x_0|^{\alpha}} \leq L^+_{N^c}(x_0,t_0).
\end{equation*}
However, this is a contradiction to \eqref{eq:contradiction}. Thus, $L^+_{N^c}(x,t)$ is also upper semicontinuous, and it follows that $\Lpos \varphi_{N}$ is upper semicontinuous. Therefore, $\Linf \varphi_{N}$ is upper semicontinuous, which concludes the proof of the lemma.
\end{proof}

\begin{lem}\label{lem:mincone}
    Let $0<\alpha \leq 1$. The function $\varphi(x) = \min(|x-x_0|^2, R^2)$ satisfies

    \begin{equation}\label{eq:Linfsub}
        \Linf \varphi(x)=   \left(R+|x-x_0| \right) \left(R-|x-x_0| \right)^{1-\alpha} - |x-x_0|^{2-\alpha} h(\alpha),
    \end{equation}
     at each $x \in B_R(x_0)$, where 
     $$
     h(\alpha) = \left(1+\frac{\alpha}{2-\alpha} \right)\left(1-\frac{\alpha}{2-\alpha} \right)^{1-\alpha}.
     $$
\end{lem}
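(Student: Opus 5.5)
The plan is to compute the two halves $\Lpos\varphi(x)$ and $\Lneg\varphi(x)$ separately by reducing each to a one-variable optimization. Fix $x\in B_R(x_0)$ and write $r=|x-x_0|<R$, so $\varphi(x)=r^2$. For each $y\neq x$ put $s=|y-x_0|$. The triangle inequality gives $|y-x|\ge |s-r|$, with equality when $y$ lies on the ray from $x_0$ through $x$ (for $r=0$ every direction works). The idea is to bound each quotient by a function of $s$ alone using this inequality, and then observe that the bound is attained on that ray.

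\emph{The supremum.} If $s\le r$, then $\varphi(y)\le r^2$ and the quotient is $\le 0$. If $r<s\le R$, then
\[
\frac{\varphi(y)-\varphi(x)}{|y-x|^\alpha}\le \frac{s^2-r^2}{(s-r)^\alpha}=(s+r)(s-r)^{1-\alpha}.
\]
Since $1-\alpha\ge 0$, the right-hand side is nondecreasing in $s$, so it is at most $(R+r)(R-r)^{1-\alpha}$. If $s>R$, the numerator is $R^2-r^2$ and the denominator is at least $(s-r)^\alpha> (R-r)^\alpha$, which gives a smaller bound. The value $(R+r)(R-r)^{1-\alpha}$ is attained at $y=x_0+R\,\frac{x-x_0}{r}$, or at any point of $\partial B_R(x_0)$ if $r=0$. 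Since this value is positive, it is the supremum, and it gives the first term of \eqref{eq:Linfsub}.

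\emph{The infimum.} Points with $s\ge r$ give nonnegative quotients, because $\varphi(y)\ge r^2$ there. So only $s<r$ matters; there $\varphi(y)=s^2$ and
\[
\frac{s^2-r^2}{|y-x|^\alpha}\ge -\frac{r^2-s^2}{(r-s)^\alpha}=-(r+s)(r-s)^{1-\alpha},
\]
with equality for $y=x_0+s\frac{x-x_0}{r}$. Substituting $s=\tau r$ with $\tau\in[0,1)$, this becomes $-r^{2-\alpha}f(\tau)$, where $f(\tau)=(1+\tau)(1-\tau)^{1-\alpha}$. It therefore remains to maximize $f$ on $[0,1)$. The logarithmic derivative is $\frac1{1+\tau}-\frac{1-\alpha}{1-\tau}$, which vanishes exactly when $\tau=\frac{\alpha}{2-\alpha}$. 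Since $f$ increases before this point and decreases after it, the maximum is $f\!\left(\frac{\alpha}{2-\alpha}\right)=h(\alpha)$. Hence $\Lneg\varphi(x)=-r^{2-\alpha}h(\alpha)$, and adding the two parts yields \eqref{eq:Linfsub}. When $r=0$ both sides of the infimum part are $0$, so that case is consistent.

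The main point needing care is the endpoint case $\alpha=1$. There the critical point is $\tau=1$, which is excluded since it corresponds to $y=x$. The supremum of $f$ on $[0,1)$ is then the limit $2$, approached but not attained, which matches $h(1)=2\cdot 0^0=2$ under the convention $0^0=1$. I would state this convention explicitly. The remaining steps, namely the monotonicity checks and the claim that the minimizing $y$ lies on the segment from $x$ toward $x_0$, are routine.
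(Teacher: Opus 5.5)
Your proof is correct and follows essentially the same route as the paper: restrict to points on the line through $x_0$ and $x$, take $y$ on $\partial B_R(x_0)$ for the supremum, and maximize $(1+\tau)(1-\tau)^{1-\alpha}$ on $[0,1)$ for the infimum, where the critical point is $\tau=\alpha/(2-\alpha)$. Your triangle-inequality bound $|y-x|\ge |s-r|$ and your sign analysis of the logarithmic derivative make rigorous what the paper only sketches (the paper instead identifies the maximum by comparing $h(\alpha)>1$ with the endpoint values, using its appendix lemma), and your explicit handling of the case $r=0$ and of the convention $0^0=1$ at $\alpha=1$ is a welcome addition.
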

\begin{proof}
    Take $x \in B_R(x_0)$. Then 
    \begin{equation*}
        \Linf \varphi(x)= \sup_{y \in \Rn} \frac{\min(|y-x_0|^2, R^2)-|x-x_0|^2}{|x-y|^{\alpha}} + \inf_{y \in \Rn} \frac{\min(|y-x_0|^2, R^2)-|x-x_0|^2}{|x-y|^{\alpha}}.
    \end{equation*}
    First, we note that for the supremum we need to only consider $y \in \Rn$ such that $\min(|y-x_0|^2, R^2) > |x-x_0|^2$,  because otherwise the term will be zero or negative. Furthermore, since the numerator only depends on $\dist(y,x_0)$ we can take the supremum over spheres centered at $x_0$, $\partial B_s(x_0)$ for all $s > |x-x_0|$, that is
    \begin{equation*}
          \sup_{y \in \Rn} \frac{\min(|y-x_0|^2, R^2)-|x-x_0|^2}{|x-y|^{\alpha}}= \sup_{y \in \partial B_s(x_0), s> |x-x_0|} \frac{\min(|y-x_0|^2, R^2)-|x-x_0|^2}{|x-y|^{\alpha}}.
    \end{equation*}
    In addition, in order to make the denominator as small as possible we need to only consider points, $y$, on the line with direction $(x-x_0)/|x-x_0|$, i.e., $y=x_0+r(x-x_0)$ with $r>1$. Furthermore, for points on the line such that $r \geq  R/|x-x_0|$, we note that the denominator will be constant and equal to $R^2$ while the numerator will keep growing as $r$ gets larger. We conclude that the supremum must be attained for some $y=x_0+r(x-x_0)$ where $1<r\leq R/|x-x_0|$. Thus, we see that
     \begin{align*}
      \sup_{y \in \partial B_s(x_0), s> |x-x_0|} \frac{\min(|y-x_0|^2, R^2)-|x-x_0|^2}{|x-y|^{\alpha}} &= \sup_{1 < r \leq R/|x-x_0|} |x-x_0|^{2-\alpha} \frac{r^2-1}{(r-1)^{\alpha}}\\
          &= |x-x_0|^{2-\alpha}  \sup_{1 < r \leq R/|x-x_0|} (r+1)(r-1)^{1-\alpha} \\
          &= |x-x_0|^{2-\alpha} \left(R/|x-x_0| +1\right)\left(R/|x-x_0| -1\right)^{1-\alpha}\\
          &= \left(R+|x-x_0| \right)  \left(R-|x-x_0| \right)^{1-\alpha}.
     \end{align*}
     Next, we turn to the infimum and we note that with a similar argument as above it follows that we only need to consider points on the line $y =x_0+r(x-x_0)$ where $0<r<1$. Thus,
     \begin{align*}
         \inf_{y \in \Rn} \frac{\min(|y-x_0|^2, R^2)-|x-x_0|^2}{|x-y|^{\alpha}} &= \inf_{0<r<1} |x-x_0|^{2-\alpha}\frac{r^2-1}{(1-r)^{\alpha}} \\
         &= - |x-x_0|^{2-\alpha} \sup_{0<r<1} (1-r)^{1-\alpha}(1+r).
     \end{align*}
     We note that for $\alpha=1$ the supremum is 2. Next, we set $g(r) = (1-r)^{1-\alpha}(1+r)$. For $0<\alpha<1$ we note that $g(0)=1$ and $g(1)=0$ and 
     \begin{align*}
         g'(r)&= (1-r)^{1-\alpha} -(1+r)(1-\alpha)(1-r)^{-\alpha} \\
         &=(1-r)^{-\alpha}\left(1-r -(1-\alpha)(1+r) \right) \\
         &= (1-r)^{-\alpha}\left(r(\alpha-2)+\alpha \right).
     \end{align*}
     Thus, $r=\frac{\alpha}{2-\alpha}$ gives an extreme point and we define 
     $$
    h(\alpha):= g\left(\frac{\alpha}{2-\alpha}\right)= \left(1+\frac{\alpha}{2-\alpha} \right)\left(1-\frac{\alpha}{2-\alpha} \right)^{1-\alpha}.
    $$
    From Lemma \ref{lem:simple} we know that $h(\alpha)>1$ for $0<\alpha \leq1$, and therefore $g\left(\frac{\alpha}{2-\alpha}\right)$ is a maximum. Thus,
    \begin{equation*}
         \inf_{y \in \Rn} \frac{\min(|y-x_0|^2, R^2)-|x-x_0|^2}{|x-y|^{\alpha}}=  - |x-x_0|^{2-\alpha} h(\alpha).
    \end{equation*}
     By adding the supremum and the infimum for the different cases \eqref{eq:Linfsub} follows. This proves the lemma.
     \end{proof}

\subsection{The comparison principle}\label{sec:comparisonprinciple}
The aim of this section is to prove a comparison principle for viscosity solutions to \eqref{eq:viscsol}. We start by proving that if we can touch a subsolution of \eqref{eq:usubKspace} from above at $x_0 \in E \subset \Rn$ with some test function $\varphi$, then $\Linf u(x_0)$ exists in the sense that $\Lneg u(x_0) < \infty$, and we can treat the viscosity subsolution as a classical subsolution. Note that we only consider $0<\alpha<1$ in this lemma. 
\begin{lem}\label{lem:Linfpointwisespace}
      Let $0<\alpha <1$. Assume that $u$ is a subsolution to \eqref{eq:usubKspace} in $E\subset \Rn$
     in the viscosity sense. Furthermore, assume that $x_0 \in E$, $N \subset E$ is a neighborhood of $x_0$ and $\varphi \in C^1(N)$ touches $u$ from above at $x_0$. Then 
     \begin{equation*}
         \Linf u(x_0) \geq \tilde{K}.   
     \end{equation*}
\end{lem}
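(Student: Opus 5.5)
Since $\varphi \in C^1(N)$ touches $u$ from above at $x_0$, we can shrink the neighborhood. For small $r>0$ with $B_r(x_0)\subset N$, set $\varphi_r$ as in \eqref{eq:defvarphirspace}. Since $B_r(x_0)\subset N$ is again a neighborhood of $x_0$ on which $\varphi\geq u$, the definition of viscosity subsolution gives
\begin{equation*}
\Lpos \varphi_r(x_0)+\Lneg \varphi_r(x_0)\geq \tilde K \quad\text{for every small } r.
\end{equation*}
The plan is to let $r\to 0$ and show that the left-hand side converges to (or is eventually bounded above by) $\Lpos u(x_0)+\Lneg u(x_0)$. Along the way we must show that these quantities are finite, so that $\Linf u(x_0)$ is well defined.

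\textbf{Monotonicity.} Since $\varphi_r\geq u$ everywhere and $\varphi_r(x_0)=u(x_0)$, we have $\Lpos\varphi_r(x_0)\geq \Lpos u(x_0)$ and $\Lneg\varphi_r(x_0)\geq\Lneg u(x_0)$. Moreover $\varphi_r$ decreases pointwise to $u$ off $x_0$ as $r$ decreases. Hence both $\Lpos\varphi_r(x_0)$ and $\Lneg\varphi_r(x_0)$ are nonincreasing in $r$, and the task is to identify their limits.

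\textbf{The $\Lpos$ term.} Split the supremum into $y\in B_r(x_0)$ and $y\notin B_r(x_0)$. The outer part is exactly the supremum of $(u(y)-u(x_0))/|y-x_0|^\alpha$ over $|y-x_0|\geq r$, which is at most $\Lpos u(x_0)$. The inner part is controlled by the $C^1$ bound:
\begin{equation*}
\frac{\varphi(y)-\varphi(x_0)}{|y-x_0|^\alpha}\leq \|\nabla\varphi\|_{L^\infty(B_r)}\,|y-x_0|^{1-\alpha}\leq C r^{1-\alpha}\to 0 .
\end{equation*}
This is where $\alpha<1$ is used. Therefore
\begin{equation*}
\Lpos\varphi_r(x_0)\leq \max\bigl(\Lpos u(x_0),\,Cr^{1-\alpha}\bigr).
\end{equation*}
Since $\Lpos u(x_0)\geq 0$ by Remark \ref{rmk:posneg}, this gives $\Lpos\varphi_r(x_0)\leq \Lpos u(x_0)+Cr^{1-\alpha}$. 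The quantity $\Lpos u(x_0)$ may a priori be $+\infty$, in which case the conclusion is trivial, though this is presumably excluded by the growth condition together with the touching. The same gradient bound also shows $\Lneg\varphi_r(x_0)\geq -Cr^{1-\alpha}$ from the inner part and, importantly, that $\Lpos u(x_0)<\infty$. Indeed, near $x_0$ the touching gives $u(y)-u(x_0)\leq C|y-x_0|$; far away we use \eqref{eq:growthcond2} with $\beta<\alpha$; and in between $u$ is bounded above by upper semicontinuity.

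\textbf{The $\Lneg$ term and the main obstacle.} The delicate step is showing that $\Lneg u(x_0)$ is finite, i.e. $>-\infty$; this is the sense in which the statement says $\Linf u(x_0)$ "exists". For the outer part,
\begin{equation*}
\Lneg\varphi_r(x_0)\leq \inf_{|y-x_0|\geq r}\frac{u(y)-u(x_0)}{|y-x_0|^\alpha}=:I_r,
\end{equation*}
and $I_r$ decreases to $\Lneg u(x_0)$ as $r\to0$. Combining this with the subsolution inequality and the bound on $\Lpos$ gives
\begin{equation*}
\tilde K\leq \Lpos u(x_0)+Cr^{1-\alpha}+I_r .
\end{equation*}
Because $\Lpos u(x_0)<\infty$, this forces $I_r\geq \tilde K-\Lpos u(x_0)-Cr^{1-\alpha}$ uniformly in $r$. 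Hence $\Lneg u(x_0)=\lim_r I_r>-\infty$. Letting $r\to 0$ then yields $\Lpos u(x_0)+\Lneg u(x_0)\geq\tilde K$.

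The step I expect to need the most care is the finiteness of $\Lpos u(x_0)$ from upper semicontinuity and growth alone. I would handle it by splitting $\mathbb{R}^n$ into $B_\delta(x_0)$, where the touching applies, an annulus, where $u\le \max u<\infty$ on the compact set, and the exterior, where \eqref{eq:growthcond2} applies. If needed, I would treat $\Lpos u(x_0)=+\infty$ as a trivial case in which the inequality holds in the extended sense.
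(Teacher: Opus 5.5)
Your proof is correct and follows essentially the same route as the paper. You split $\Lpos\varphi_r(x_0)$ into an inner part bounded by $Cr^{1-\alpha}$ (using $\alpha<1$) and an outer part, bound $\Lneg\varphi_r(x_0)$ by the infimum of the difference quotients of $u$ over $|y-x_0|\ge r$, and let $r\to 0$; you just combine these bounds in one step instead of passing through the paper's auxiliary monotone limit $\tilde{C}=\lim_k(-\Lneg\varphi_{r_k}(x_0))$. You also check $\Lpos u(x_0)<\infty$ explicitly, which the paper only asserts (as $\Lpos\varphi_r(x_0)<\infty$); off $B_\delta(x_0)$ this should come from the growth bound \eqref{eq:growthcond2} rather than upper semicontinuity, which is assumed only in $E$, and your annulus need not lie in $E$.
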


\begin{proof}
We start by choosing $r$ so small that $B_r(x_0) \subset N$. Since $u$ is a viscosity subsolution to \eqref{eq:usubKspace} we have that
\begin{equation}\label{eq:ineqstat}
    \Linf \varphi_r(x_0) \geq \tilde{K}.
\end{equation}   
Since $\Lneg \varphi_r(x_0) \leq 0$, and $\Lpos \varphi_{\tilde{r}}(x_0) \leq \Lpos \varphi_{r}(x_0)$ for $\tilde{r} < r$ we see that
\begin{align*}
    0 \leq -\Lneg \varphi_{\tilde{r}}(x_0) \leq -\tilde{K} + \Lpos \varphi_{\tilde{r}}(x_0) \leq -\tilde{K} + \Lpos \varphi_{r}(x_0)< \infty.
\end{align*}
 Thus, for fixed $r>0$ and a sequence $\{r_k\}$ such that $r_k \to 0$ when $k \to \infty$ we see that
 \begin{align*}
    0 \leq -\Lneg \varphi_{r_k}(x_0) \leq  -\tilde{K} + \Lpos \varphi_{r}(x_0)< \infty,
\end{align*}
 and it follows from the monotonicity of $r \to \Lneg \varphi_r$ that $-\Lneg \varphi_{r_k}(x_0) \to \tilde{C}$, where $0\leq \tilde{C} < \infty$. Thus,
  \begin{align}\label{eq:Dshortspace}
    \tilde{C} \leq   -\tilde{K} + \Lpos \varphi_{r}(x_0).
\end{align}
 
 Next, we take $\rho >0$. For $k$ so large that $\rho > r_k$ we note that
\begin{align*}
    \Lneg \varphi_{r_k}(x_0) = \inf_{y \in \Rn} \frac{\varphi_{r_k}(y)-\varphi_{r_k}(x_0)}{|y-x_0|^{\alpha}} \leq \inf_{y \in \Rn \setminus B_{\rho}(x_0)} \frac{u(y)-u(x_0)}{|y-x_0|^{\alpha}},
\end{align*}
and by letting $k \to \infty$ we obtain
\begin{equation*}
    \tilde{C} \geq - \inf_{y \in \Rn \setminus B_{\rho}(x_0)} \frac{u(y)-u(x_0)}{|y-x_0|^{\alpha}}.
\end{equation*}
Since this is true for all $\rho >0 $, we see that $\tilde{C} \geq - \Lneg u(x_0)$. Therefore, it follows from \eqref{eq:Dshortspace} that
\begin{equation}\label{eq:Drspace}
    -\Lneg u(x_0) \leq -\tilde{K} + \Lpos \varphi_{r}(x_0).
\end{equation}
Next, we study the limit of $\Lpos \varphi_{r}(x_0)$ as $r \to 0$. We have that
\begin{align}\label{eq:Lposvarphirspace}
    \Lpos \varphi_r(x_0) = \max\left(\sup_{y \in \Rn \setminus B_r(x_0) } \frac{u(y)-u(x_0)}{|y-x_0|^{\alpha}}, \sup_{y \in B_r(x_0)} \frac{\varphi(y)-\varphi(x_0)}{|y-x_0|^{\alpha}}\right).
\end{align}
At first we note that 
\begin{align*}
   \sup_{y \in \Rn \setminus B_r(x_0)} \frac{u(y)-u(x_0)}{|y-x_0|^{\alpha}} \geq 0,
\end{align*}
due to Remark \ref{rmk:posneg}. Furthermore, since $\varphi_r$ is Lipschitz continuous on compact sets it follows that
\begin{align*}
    \sup_{y \in B_r(x_0)} \frac{\varphi(y)-\varphi(x_0)}{|y-x_0|^{\alpha}} \leq \sup_{y \in \overline{B_r(x_0)}} C |y-x_0|^{1-\alpha} \leq C r^{1-\alpha}.
\end{align*}
 Thus, if we let $r \to 0$ in \eqref{eq:Lposvarphirspace} we see that
\begin{align*}
   \lim_{r \to 0} \Lpos \varphi_r(x_0) = \max\left(\Lpos u(x_0),0 \right) = \Lpos u(x_0),
\end{align*}
where we have used that $\Lpos u(x_0) \geq 0$ due to Remark \ref{rmk:posneg}. By sending $r$ to zero in \eqref{eq:Drspace} we obtain
 \begin{equation*}
-\Linf u(x_0) \leq -\tilde{K},
 \end{equation*}
 which proves the lemma.
\end{proof}

The corresponding result holds naturally for the viscosity subsolutions of \eqref{eq:viscsol}. It is formulated in the following corollary.
\begin{cor}\label{cor:Linfpointwise}
     Let $0<\alpha <1$. Assume that $u$ is a subsolution to \eqref{eq:viscsol}
     in the viscosity sense. Furthermore, assume that $(x_0,t_0) \in \Omega$,  $\Ns $ is a neighbourhood of $(x_0,t_0)$ such that $ \Ns \subset \Omega$, and $\varphi \in C^1(\Ns)$ touches $u$ from above at $(x_0,t_0)$. Then 
     \begin{equation*}
        \frac{\partial \varphi(x_0,t_0)}{\partial t} - \Linf u(x_0,t_0) \leq K.   
     \end{equation*}
\end{cor}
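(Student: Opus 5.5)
The plan is to repeat the proof of Lemma \ref{lem:Linfpointwisespace} at the fixed time $t_0$, using the neighbourhoods $Q_r(x_0,t_0)$ in place of $N$. The term $\partial_t\varphi(x_0,t_0)$ does not depend on $r$, so it plays the role of the constant $-\tilde K$.

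First choose $r_0$ so small that $Q_{r_0}(x_0,t_0)\subset \Ns$. For every $r\le r_0$ the restriction of $\varphi$ to $Q_r(x_0,t_0)$ is still a $C^1$ function touching $u$ from above at $(x_0,t_0)$. Definition \ref{def:viscsol} applied with the neighbourhood $Q_r$ therefore gives
\begin{equation*}
\frac{\partial \varphi}{\partial t}(x_0,t_0) - K \leq \Lpos\varphi_r(x_0,t_0)+\Lneg\varphi_r(x_0,t_0),
\end{equation*}
with $\varphi_r$ as in \eqref{eq:defvarphir}. All operators act only in the $x$ variable at the fixed time $t_0$, so only the slice $t=t_0$ of $\varphi_r$ matters. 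On that slice $\varphi_r(\cdot,t_0)$ equals $\varphi(\cdot,t_0)$ on $B_r(x_0)$ and $u(\cdot,t_0)$ outside it.

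Next, since $\varphi\ge u$ near $(x_0,t_0)$ with equality at the point, $r\mapsto \Lpos\varphi_r(x_0,t_0)$ is nondecreasing and $r\mapsto\Lneg\varphi_r(x_0,t_0)$ is nondecreasing as well. Fixing $r$ and taking $r_k\to0$, we get
\begin{equation*}
0\le -\Lneg\varphi_{r_k}(x_0,t_0)\le K-\partial_t\varphi(x_0,t_0)+\Lpos\varphi_r(x_0,t_0)<\infty,
\end{equation*}
where finiteness follows from the growth condition \eqref{eq:growthcond} and the $C^1$ bound. By monotonicity, $-\Lneg\varphi_{r_k}(x_0,t_0)$ converges to some $\tilde C$. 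Comparing with infima over $\Rn\setminus B_\rho(x_0)$ exactly as before shows $\tilde C\ge -\Lneg u(x_0,t_0)$. This yields
\begin{equation*}
-\Lneg u(x_0,t_0)\le K-\partial_t\varphi(x_0,t_0)+\Lpos\varphi_r(x_0,t_0).
\end{equation*}

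Finally, let $r\to0$. We split $\Lpos\varphi_r(x_0,t_0)$ into a supremum over $\Rn\setminus B_r(x_0)$, which tends to $\Lpos u(x_0,t_0)\ge0$ by Remark \ref{rmk:posneg}, and a supremum over $B_r(x_0)$. The latter is bounded by $Cr^{1-\alpha}\to0$, because $\varphi(\cdot,t_0)$ is Lipschitz near $x_0$ and $\alpha<1$. Hence $\Lpos\varphi_r(x_0,t_0)\to\Lpos u(x_0,t_0)$, and rearranging gives $\partial_t\varphi(x_0,t_0)-\Linf u(x_0,t_0)\le K$.

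The only point needing care is this last step. The restriction $\alpha<1$ is what makes the local contribution $Cr^{1-\alpha}$ vanish, and Remark \ref{rmk:posneg} is needed so that the maximum with $0$ reduces to $\Lpos u(x_0,t_0)$. Everything else is literally the stationary argument with $\tilde K$ replaced by $K-\partial_t\varphi(x_0,t_0)$.
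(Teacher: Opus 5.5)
Your proposal is correct and is essentially the paper's proof: the paper rewrites the viscosity inequality for $\varphi_r$ at $(x_0,t_0)$ in the stationary form and reruns the argument of Lemma \ref{lem:Linfpointwisespace} verbatim, which is what you do with the details written out. One cosmetic slip does not affect the argument: your displayed inequalities correspond to $\tilde K=\partial_t\varphi(x_0,t_0)-K$ in that lemma, not to $K-\partial_t\varphi(x_0,t_0)$ as your closing sentence says, and the paper's own proof has the same sign slip.
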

\begin{proof}
We start by choosing $r$ so small that $B_r(x_0) \subset N$. Since $u(x,t)$ is a viscosity subsolution to \eqref{eq:viscsol} we have that
\begin{equation*}
    \frac{\partial \varphi(x_0,t_0) }{\partial t}-\Linf \varphi_r(x_0,t_0) \leq K,
\end{equation*}   
or equivalently that
\begin{equation*}
    -\Linf \varphi_r(x_0,t_0) \leq \underbrace{K-  \frac{\partial \varphi(x_0,t_0) }{\partial t}}_{:= \tilde{K}}.
\end{equation*}
The rest of the proof follows from the exact same arguments as in Lemma \ref{lem:Linfpointwisespace}, starting from \eqref{eq:ineqstat}.
\end{proof}

 \begin{prop}\label{lem:compprinc}
    Let $0<\alpha <1$. Furthermore, let $v$ be a bounded viscosity supersolution and $u$ a bounded viscosity subsolution of \eqref{eq:Luf} in $\Omega$ such that $u \leq v$ in $\RnpO$, and
    $$
   \limsup_{(x,t) \to (x_0,t_0)} u(x,t) \leq  \liminf_{(x,t) \to (x_0,t_0)} v(x,t) \quad \textup{for } (x,t) \in  \Omega \textup{ and } (x_0,t_0) \in \partial \Omega \cap (\Rn \times[T_0,T)).
    $$
    Suppose also that $v$ and $u$ are lower semicontinuous and upper semicontinuous in $\Rnp$, respectively, and that
        \begin{equation}\label{eq:limitatinf}
               \limsup_{(x,t) \textup{ such that } |x| \to \infty} u(x,t) \leq  \liminf_{(x,t) \textup{ such that } |x| \to \infty} v(x,t).
        \end{equation}
    Then $u(x,t) \leq v(x,t)$ for all $(x,t) \in \Omega$. 
\end{prop}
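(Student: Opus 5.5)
We argue by contradiction using doubling of variables in space and time, together with the pointwise evaluation of the operator supplied by Corollary \ref{cor:Linfpointwise}. First we perturb $u$ so that its inequality becomes strict. For small $\epsilon>0$ set $u_\epsilon(x,t)=u(x,t)-\epsilon t$, or alternatively $u-\epsilon/(T-t)$ to keep the maximum away from $t=T$. Since the operator is invariant under adding functions of $t$ alone, $u_\epsilon$ is a viscosity subsolution of $\partial_t u-\Linf u=-\epsilon$, and it still satisfies the boundary and infinity hypotheses. Now suppose $M:=\sup_{\Omega}(u_\epsilon-v)>0$.

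\textbf{Step 1: locating the maximum.} The assumption $u\le v$ on $\RnpO$, the limsup/liminf condition on the parabolic boundary $\partial\Omega\cap(\Rn\times[T_0,T))$, and \eqref{eq:limitatinf} together force the maximum to be approached only inside $\Omega$ or at the top time $t=T$. We then double variables:
\[
\Phi_\delta(x,y,t,s)=u_\epsilon(x,t)-v(y,s)-\frac{|x-y|^2}{2\delta}-\frac{|t-s|^2}{2\delta}.
\]
By upper semicontinuity of $u_\epsilon-v$ and boundedness of $u,v$, this function attains its maximum at some point $(x_\delta,t_\delta,y_\delta,s_\delta)$. The standard estimates give $|x_\delta-y_\delta|^2/\delta\to0$ and $|t_\delta-s_\delta|^2/\delta\to 0$, and the points converge to a maximum point $(\hat x,\hat t)$ of $u_\epsilon-v$, which lies in $\Omega$ or on its top. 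When the maximum sits at $t=T$, a one-sided time derivative still gives the inequality with the right sign, as is usual for parabolic problems.

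\textbf{Step 2: applying the equations.} The function $\varphi(x,t)=v(y_\delta,s_\delta)+\frac{|x-y_\delta|^2}{2\delta}+\frac{|t-s_\delta|^2}{2\delta}+\text{const}$ touches $u_\epsilon$ from above at $(x_\delta,t_\delta)$. Corollary \ref{cor:Linfpointwise} then gives
\[
\frac{t_\delta-s_\delta}{\delta}-\Linf u_\epsilon(x_\delta,t_\delta)\le -\epsilon .
\]
Likewise, applying the supersolution analogue (obtained from the corollary via $-v$) gives
\[
\frac{t_\delta-s_\delta}{\delta}-\Linf v(y_\delta,s_\delta)\ge 0 .
\]
Subtracting the two yields $\Linf u_\epsilon(x_\delta,t_\delta)-\Linf v(y_\delta,s_\delta)\ge\epsilon$.

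\textbf{Step 3: the nonlocal comparison, which is the main obstacle.} We need to show that the left-hand side is $\le o(1)$ as $\delta\to0$. For any shift $h\neq0$, maximality of $\Phi_\delta$ at the chosen points, with $t$ and $s$ frozen, gives
\[
u_\epsilon(x_\delta+h,t_\delta)-u_\epsilon(x_\delta,t_\delta)\le v(y_\delta+h,s_\delta)-v(y_\delta,s_\delta),
\]
because the penalization term is unchanged under a common shift. Dividing by $|h|^\alpha$ and taking the sup and the inf over $h$ gives $\Lpos u_\epsilon(x_\delta,t_\delta)\le\Lpos v(y_\delta,s_\delta)$ and $\Lneg u_\epsilon(x_\delta,t_\delta)\le\Lneg v(y_\delta,s_\delta)$. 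Hence $\Linf u_\epsilon(x_\delta,t_\delta)\le \Linf v(y_\delta,s_\delta)$, which contradicts the strict gap $\epsilon$. This is exactly where working with the pointwise operator, guaranteed finite by Lemma \ref{lem:Linfpointwisespace} for $\alpha<1$, pays off.

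The delicate points are the following. The shifted points $x_\delta+h$ may leave $\Omega$, but $\Phi_\delta$ is defined on all of $\Rnp$ and its maximum is global there, because $u\le v$ outside $\Omega$ and the penalization is nonnegative. This is why Step 1 needs the global maximum over $\Rnp$, which requires a short argument that $u_\epsilon-v$ takes its largest positive values inside $\Omega$. We must also use the finiteness of $\Lneg$ from Corollary \ref{cor:Linfpointwise} so that the subtraction is meaningful. Finally, letting $\epsilon\to0$ gives $u\le v$ in $\Omega$.
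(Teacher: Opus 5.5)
Your argument is essentially the paper's proof. The paper uses exactly your alternative perturbation $\tilde u=u+\frac{\delta}{t-T}$, doubles variables with the maximum taken over all of $\Rnp\times\Rnp$, applies Corollary \ref{cor:Linfpointwise} at both touching points (whose time derivatives agree), and then uses the common-shift inequality \eqref{eq:Linfcomp} to get $\Linf\tilde u(x_\epsilon,t_\epsilon)\le\Linf v(y_\epsilon,\tau_\epsilon)$. Commit to that $1/(T-t)$ perturbation rather than $u-\epsilon t$: it forces $\tilde u-v\to-\infty$ as $t\to T$, so you never need the unproved ``one-sided time derivative'' at the top (Definition \ref{def:viscsol} only gives information at points with $N\times I\subset\Omega$), and it keeps $\tilde u\le u$, which $u-\epsilon t$ fails to do when $T_0<0$.
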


The proof is almost identical to a combination of the first part of Theorem 4.1 in \cite{KKL2019} and Proposition 4.10 in \cite{HL2016} and we will therefore only provide a sketch. 
\begin{proof}
   Take $\delta >0$. In order to be able to work with a strict subsolution we define $\tilde{u}= u+ \frac{\delta}{t-T}$, so that $\tilde{u}$ becomes a viscosity subsolution of the equation
    \begin{equation*}
        \frac{\partial v}{\partial t}-\Linf v = -\frac{\delta}{(t-T)^2},
    \end{equation*}
    in $\Omega \cap \Rn \times [T_0, T-\gamma]$, for any $\gamma < T-T_0$. We let 
    \begin{equation*}
        K = \inf_{t \in (T_0,T)} \frac{\delta}{(t-T)^2}=\frac{\delta}{(T_0-T)^2},
    \end{equation*}
    and note that $\tilde{u}$ solves
        \begin{equation*}
        \frac{\partial \tilde{u}}{\partial t}-\Linf \tilde{u} \leq -K,
    \end{equation*}
    in the viscosity sense. Since $\tilde{u} < u$, the boundary conditions still hold when we replace $u$ with $\tilde{u}$ and we note that $\tilde{u}(x,t)-v(x,t)\to -\infty$ as $t \to T$ since $u$ and $v$ are bounded as $t \to T$. We let 
    \begin{equation*}
        M= \sup_{\Omega} (\tilde{u}-v),
    \end{equation*}
    and for the sake of contradiction we assume that $M>0$. We define
    \begin{equation*}
        \Psi_{\epsilon}(x,y,t,\tau)= \tilde{u}(x,t)-v(y,\tau)-\frac{|x-y|^2+|t-\tau|^2}{\epsilon},
    \end{equation*}
    and 
    \begin{equation*}
        M_{\epsilon} = \sup_{\Rn \times [T_0,T] \times \Rn \times [T_0,T]} \Psi_{\epsilon}(x,y,t,\tau). 
    \end{equation*}
    By the semicontinuity and \eqref{eq:limitatinf} it follows that $M_{\epsilon}$ is attained. We call the point where $M_{\epsilon}$ for $(x_{\epsilon}, y_{\epsilon}, t_{\epsilon}, \tau_{\epsilon})$. Moreover, from Proposition 3.7 in \cite{CIL1992} it follows that there is a subsequence $\{\epsilon_j \}$ that we will continue to denote by $\epsilon$ such that $x_{\epsilon}\to x^*, y_{\epsilon}\to x^*, t_{\epsilon}\to t^*$ and $\tau_{\epsilon}\to t^*$, where $(x^*, t^*) \in \Omega$. Therefore, for $\epsilon $ small enough we may assume that $(x_{\epsilon}, t_{\epsilon}), (y_{\epsilon}, \tau_{\epsilon}) \in \Omega$. In addition,
    \begin{equation*}
        \lim_{\epsilon \to 0} \tilde{u}(x_{\epsilon}, t_{\epsilon})-v(y_{\epsilon}, \tau_{\epsilon}) =\tilde{u}(x^*,t^*)-v(x^*,t^*)= \sup_{\Omega}(\tilde{u}-v) = M.
    \end{equation*}
From the definition, the function
\begin{equation*}\label{eq:tildeutouch}
    \varphi_{\epsilon}(x,t)= \tilde{u}(x_{\epsilon},t_{\epsilon})+\frac{|x-y_{\epsilon}|^2+|t-\tau_{\epsilon}|^2}{\epsilon} - \frac{|x_{\epsilon}-y_{\epsilon}|^2+|t_{\epsilon}-\tau_{\epsilon}|^2}{\epsilon},
\end{equation*}
touches $\tilde{u}$ from above at $(x_{\epsilon}, t_{\epsilon})$ and the function 
\begin{equation*}\label{eq:vtouch}
    \phi_{\epsilon}(y,\tau)= v(y_{\epsilon},\tau_{\epsilon})+\frac{|x_{\epsilon}-y_{\epsilon}|^2+|t_{\epsilon}-\tau_{\epsilon}|^2}{\epsilon} - \frac{|x_{\epsilon}-y|^2+|t_{\epsilon}-\tau|^2}{\epsilon},
\end{equation*}
 touches $v$ from below at $(y_{\epsilon}, \tau_{\epsilon})$.
 According to Lemma \ref{cor:Linfpointwise} and the fact that $\frac{d\varphi_{\epsilon}}{\partial t}(x_{\epsilon}, t_{\epsilon})= \frac{d\phi_{\epsilon}}{\partial \tau}(y_{\epsilon}, \tau_{\epsilon})$ we have that
\begin{equation*}
    \Linf \tilde{u}(x_{\epsilon},t_{\epsilon}) > \Linf v(y_{\epsilon},\tau_{\epsilon}).
\end{equation*}
 Furthermore, since $\Psi_{\epsilon}(x_{\epsilon}, y_{\epsilon}, t_{\epsilon}, \tau_{\epsilon}) \geq \Psi_{\epsilon}(x_{\epsilon}+z, y_{\epsilon}+z, t_{\epsilon}, \tau_{\epsilon})$ for each $z \in \Rn$ it also follows that
\begin{equation}\label{eq:Linfcomp}
    \tilde{u}(x_{\epsilon}+z,t_{\epsilon})-\tilde{u}(x_{ \epsilon},t_{\epsilon}) \leq v(y_{\epsilon}+z, \tau_{\epsilon})-v(y_{\epsilon},\tau_{\epsilon}),
\end{equation}
and thus,
\begin{equation*}
       \Linf \tilde{u}(x_{\epsilon},t_{\epsilon}) \leq \Linf v(y_{\epsilon},\tau_{\epsilon}),
\end{equation*}
which is a contradiction. Therefore, $\tilde{u}  \leq v$. We can now let $\delta \to 0$, which proves the proposition.
\end{proof}

\section{The Perron method}\label{sec:Perron}
In this section we are going to construct solutions to \eqref{eq:Luf} using Perron's method. 

\begin{lem}\label{lem:supsubissub}
    Let $\mathcal{A}$ be a family of viscosity subsolutions (resp. viscosity supersolutions) of \eqref{eq:Luf} and define 
    \begin{equation}\label{eq:sup_u}
        u(x,t):= \sup_{w \in \mathcal{A}} w(x,t) \quad (\textup{resp. } v(x,t):= \inf_{w \in \mathcal{A}} w(x,t) ).
    \end{equation}
Then $u^*$ (resp $v_*$) is a viscosity subsolution (resp. supersolution) of $\eqref{eq:Luf}$ in $\Omega$, where $u^*$ and $v_*$ are defined in \eqref{eq:u_upp} and \eqref{eq:u_low}, respectively.
\end{lem}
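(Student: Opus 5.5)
We prove the statement for subsolutions; the supersolution case follows by applying it to $\{-w : w\in\mathcal{A}\}$, since $-\Linf(-\psi)=\Linf\psi$ and $(-v)^*=-v_*$. The plan is the standard stability argument for viscosity subsolutions under suprema, adapted to the nonlocal tail in \eqref{eq:varphiN}.

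\textbf{Preliminaries.} First check that $u^*$ is admissible. It is upper semicontinuous by construction. For the growth condition \eqref{eq:growthcond} we assume, as is implicit in the Perron setting, that the family obeys the bound uniformly, $|w(x,t)|\le C(1+|x|^\beta)$ with the same $C$ and $\beta<\alpha$. Then $u^*$ obeys it as well, since the right-hand side is continuous.

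\textbf{Approximation.} Let $\varphi\in C^1(\Ns)$ touch $u^*$ from above at $(x_0,t_0)$. By Remark \ref{rmk:strict_touching} we may take the touching strict, and after shrinking we may work on a compact cylinder $\overline{Q_r}(x_0,t_0)\subset \Ns$. By the definition of $u^*$ there are points $(y_k,s_k)\to(x_0,t_0)$ with $u(y_k,s_k)\to u^*(x_0,t_0)$. We can then pick $w_k\in\mathcal{A}$ with $w_k(y_k,s_k)\ge u(y_k,s_k)-1/k$. Since $w_k$ is upper semicontinuous, $w_k-\varphi$ attains its maximum on $\overline{Q_r}$ at some point $(x_k,t_k)$.

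\textbf{Maxima converge and values converge.} We use the standard argument:
\begin{equation*}
\limsup_k \,(w_k-\varphi)(x_k,t_k)\le \limsup_k \,(u^*-\varphi)(x_k,t_k).
\end{equation*}
The left side is at least $(w_k-\varphi)(y_k,s_k)\to 0$. Strict touching together with upper semicontinuity of $u^*-\varphi$ then forces $(x_k,t_k)\to(x_0,t_0)$ and $w_k(x_k,t_k)\to u^*(x_0,t_0)$. Hence for large $k$ the point $(x_k,t_k)$ lies in the interior of $Q_r$.

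\textbf{Testing each $w_k$.} Set $\varphi^k=\varphi+c_k$ with $c_k=(w_k-\varphi)(x_k,t_k)\to0$. Then $\varphi^k$ touches $w_k$ from above at $(x_k,t_k)$ on a neighborhood $N_k\times I_k\subset Q_r$ shrinking to the point, so the subsolution property gives
\begin{equation*}
\partial_t\varphi(x_k,t_k)\le K+\Linf\varphi^k_{N_k}(x_k,t_k),
\end{equation*}
where outside $N_k\times I_k$ the function $\varphi^k_{N_k}$ equals $w_k$. It is cleaner to keep a fixed neighborhood $Q_r$ and test with the function equal to $\varphi^k$ on $Q_r$ and $w_k$ outside. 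This is legitimate because $\varphi^k\ge w_k$ on all of $Q_r$.

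\textbf{Passing to the limit.} Since $w_k\le u\le u^*$ outside $Q_r$, increasing the outside values can only increase $\Lpos$ and $\Lneg$. Replacing $w_k$ by $u^*$ outside $Q_r$ therefore gives
\begin{equation*}
\partial_t\varphi(x_k,t_k)-K\le \Linf\tilde\varphi^k(x_k,t_k),
\end{equation*}
where $\tilde\varphi^k$ equals $\varphi^k$ on $Q_r$ and $u^*$ outside. The only difference between $\tilde\varphi^k$ and $\varphi_r$ is the constant $c_k$ inside $Q_r$. The quotients with $y$ inside are unchanged, and those with $y$ outside change by at most $|c_k|/|y-x_k|^\alpha$, where $|y-x_k|\ge r/2$. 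So
\begin{equation*}
\Linf\tilde\varphi^k(x_k,t_k)\le \Linf\varphi_r(x_k,t_k)+2|c_k|(2/r)^\alpha.
\end{equation*}
Lemma \ref{lem:Linfusc}, applied with the upper semicontinuous function $u^*$, makes $\Linf\varphi_r$ upper semicontinuous. Letting $k\to\infty$, and using continuity of $\partial_t\varphi$, yields
\begin{equation*}
\partial_t\varphi(x_0,t_0)-\Linf\varphi_r(x_0,t_0)\le K.
\end{equation*}
Finally, to pass from $Q_r$ back to a general $\Ns$, note that $\Linf\varphi_N\ge\Linf\varphi_r$ because $\varphi\ge u^*$ on $N\setminus Q_r$. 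This gives the desired inequality for $\varphi_N$ as well.

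\textbf{Main obstacle.} The delicate step is the limit passage. It relies on the upper semicontinuity from Lemma \ref{lem:Linfusc} and on controlling the nonlocal tails uniformly in $k$. The uniform growth bound, via Remark \ref{rmk:posneg}, ensures that the suprema and infima remain finite.
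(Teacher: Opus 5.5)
Your proposal is correct and follows essentially the same route as the paper: reduce to strict touching, extract maximizers $(x_k,t_k)\to(x_0,t_0)$ of $w_k-\varphi$ over $\overline{Q_r}$ (you re-derive inline what the paper takes from Lemma \ref{lem:strictmax}), test each $w_k$ on the fixed cylinder $Q_r$, raise the outside values to $u^*$ and absorb the constant shift as an $O(|c_k|\,r^{-\alpha})$ tail error, then pass to the limit using the upper semicontinuity from Lemma \ref{lem:Linfusc} and enlarge $Q_r$ back to $\Ns$ by monotonicity. Your explicit assumption of a uniform growth bound on $\mathcal{A}$ is something the paper leaves implicit, and it is genuinely needed for $u^*$ to be admissible (the supremum of all constant subsolutions is $+\infty$).
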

\begin{proof}
    We provide the proof for subsolutions. We consider $(x_0, t_0) \in \Omega$, a neighborhood $\Ns$ of $(x_0,t_0)$ such that $ \Ns \subset \Omega$ and let we let $\varphi \in C^1(\Ns)$ be a test function that touches $u^*$ strictly from above at $(x_0,t_0)$. From Lemma \ref{lem:strictmax} we know that that there exists a cylinder $Q_r=B_r(x_0,t_0) \times (t_0-r,t_0+r)$, such that $Q_r \subset \subset \Ns$, and points $(x_k,t_k) \in Q_r$, such that $(x_k,t_k)\to (x_0, t_0)$, and functions $u_k \in \mathcal{A}$ such that 
  \begin{equation}\label{eq:supoutsideuk}
         \sup_{Q_r } (u_k-\varphi) = u_k(x_k,t_k)-\varphi(x_k,t_k),
    \end{equation}
     and $u_k(x_k,t_k) \to u^*(x_0, t_0)$. We note that $u_k(x_k,t_k)\leq u^*(x_k,t_k) < \varphi(x_k,t_k)$ for each $(x_k,t_k) \neq (x_0,t_0)$. However, from \eqref{eq:supoutsideuk} it follows that the function $\varphi-M_k$ touches $u_k$ from above at $(x_k,t_k)$, where
     $$
     M_k = \inf_{Q_r} (\varphi-u_k) = -\sup_{Q_r } (u_k-\varphi) = \varphi(x_k,t_k)-u_k(x_k,t_k).
     $$
     We next define $\varphi^k=\varphi-M_k$ and
    \begin{align*}
     \varphi_{r }^k= \begin{cases}
                      \varphi-M_k & \textup{if  } \ (x,t) \in Q_r ,\\
                      u_k & \textup{if  } (x,t) \in \ \Rnp \setminus Q_r,
            \end{cases}
 \end{align*}
and recall that 
\begin{align*}
     \varphi_{r }= \begin{cases}
                      \varphi & \textup{if  } \ (x,t) \in Q_r ,\\
                      u^* & \textup{if  } (x,t) \in \ \Rnp \setminus Q_r.
            \end{cases}
 \end{align*}
We introduce the notation
    \begin{equation*}
        L^+_{r}(x,t) = \sup_{y \in B_r(x_0)} \frac{\varphi(y,t)-\varphi(x,t)}{|y-x|^{\alpha}} \quad \textup{and } \quad L^-_{r}(x,t) = \inf_{y \in B_r(x_0)} \frac{\varphi(y,t)-\varphi(x,t)}{|y-x|^{\alpha}},
    \end{equation*}
and
    \begin{equation*}
        L^+_{r^c}(x,t) = \sup_{y \in \Rn \setminus B_r(x_0)} \frac{u_k(y,t)-\varphi(x,t)+M_k}{|y-x|^{\alpha}} \quad \textup{and } \quad L^-_{r^c}(x,t) = \inf_{y \in \Rn \setminus B_r(x_0)} \frac{u_k(y,t)-\varphi(x,t) +M_k}{|y-x|^{\alpha}},
    \end{equation*}
    and we note that for $(x,t) \in Q_{r/2}$ we have that
\begin{align*}
     L^+_{r^c}(x,t) &\leq \sup_{y \in \Rn \setminus B_r(x_0)} \frac{u_k(y,t)-\varphi(x,t)}{|y-x|^{\alpha}} + \sup_{y \in \Rn \setminus B_r(x_0)} \frac{M_k}{|y-x|^{\alpha}} \\
     &= \sup_{y \in \Rn \setminus B_r(x_0)} \frac{u_k(y,t)-\varphi(x,t)}{|y-x|^{\alpha}} + \frac{M_k}{\inf_{y \in \Rn \setminus B_r(x_0)}|y-x|^{\alpha}} \\
     & \leq \sup_{y \in \Rn \setminus B_r(x_0)} \frac{u^*(y,t)-\varphi(x,t)}{|y-x|^{\alpha}} + \frac{2^{\alpha} M_k}{r^{\alpha}}.
\end{align*}
Furthermore,
\begin{align*}
    L^-_{r^c}(x,t) &\leq \inf_{y \in \Rn \setminus B_r(x_0)} \frac{u_k(y,t)-\varphi(x,t)}{|y-x|^{\alpha}} + \sup_{y \in \Rn \setminus B_r(x_0)} \frac{M_k}{|y-x|^{\alpha}} \\
     &= \inf_{y \in \Rn \setminus B_r(x_0)} \frac{u_k(y,t)-\varphi(x,t)}{|y-x|^{\alpha}} + \frac{M_k}{\inf_{y \in \Rn \setminus B_r(x_0)}|y-x|^{\alpha}} \\
     & \leq \inf_{y \in \Rn \setminus B_r(x_0)} \frac{u^*(y,t)-\varphi(x,t)}{|y-x|^{\alpha}} + \frac{2^{\alpha} M_k}{r^{\alpha}}.
\end{align*}
For $(x,t) \in Q_r$ we have that
    \begin{align*}
        \Lpos \varphi_{r}^k(x,t) &= \max(L^+_{r^c}(x,t), L^+_{r}(x,t)), \\
        \Lneg \varphi_{r}^k(x,t) &= \min(L^-_{r^c}(x,t), L^-_{r}(x,t)).
    \end{align*}
Thus, for $k$ large enough such that $(x_k,t_k) \in Q_{r/2}$ it follows that
\begin{equation*}
    \Linf \varphi^k_{r}(x_k,t_k) \leq \Linf \varphi_{r}(x_k,t_k)+ \frac{2^{\alpha} M_k}{r^{\alpha}},
\end{equation*}
where the last term goes to zero $k \to \infty$ since $M_k$ goes to zero as $k \to \infty$. Since each $u_k$ is a subsolution in the viscosity sense it follows that
    \begin{align*}
       0 &\geq \lim_{k \to \infty} \left(\frac{\partial \varphi^k}{\partial t}(x_k, t_k) - \Linf \varphi_{r }^k(x_k, t_k) \right)\\
       &\geq \lim_{k \to \infty} \left(\frac{\partial \varphi}{\partial t}(x_k, t_k) - \Linf \varphi_{r }(x_k, t_k) -\frac{2^{\alpha} M_k}{r^{\alpha}}\right)\\
       &\geq \frac{\partial \varphi}{\partial t}(x_0, t_0) - \Linf \varphi_{r}(x_0, t_0)\\
       & \geq  \frac{\partial \varphi}{\partial t}(x_0, t_0) - \Linf \varphi_{N }(x_0, t_0),
    \end{align*}
where we used Lemma \ref{lem:Linfusc} in the third inequality. The last inequality follows from the fact that $\Linf \varphi_{r }(x_0, t_0) \leq  \Linf \varphi_{N }(x_0, t_0)$ since $\varphi \geq u^*$ in $\Ns$. Thus, we have proved that $u^*$ is a viscosity subsolution.
\end{proof}

\subsection{Upper and lower Perron solutions}

\begin{definition}\label{def:upperlowerclass}
Let $g:\RnpO \to \R$ be a continuous, bounded function with a limit at infinity. The upper class $\mathcal{U}_g$ consists of all bounded functions $v$ such that 
\begin{itemize}
    \item $v:\Rnp \to (-\infty,\infty)$ is a viscosity supersolution of \eqref{eq:Luf} and $v \in C(\Omega)$,
    \item $\liminf_{\xi \to \xi_0} v(\xi) \geq g(\xi_0)$ for $\xi \in \Omega$, $\xi_0 \in \partial \Omega$, 
    \item  $v \geq g$ in $\Rnp \setminus \Omega$.
\end{itemize} 
 The lower class $\mathcal{L}_g$ consists of all bounded functions $u$ such that 
\begin{itemize}
    \item $u:\Rnp \to (-\infty,\infty)$ is a viscosity subsolution of \eqref{eq:Luf} and $u \in C(\Omega)$,
    \item$\limsup_{\xi \to \xi_0} u(\xi) \leq g(\xi_0)$ when $\xi \in \Omega$, $\xi_0 \in \partial \Omega$,
    \item  $u \leq g$ in $\Rnp \setminus \Omega$.
\end{itemize}
\end{definition}

We note that $\ug$ and $\lcg$ are non-empty since $-\|g\|_{L^{\infty}(\RnpO)} \in \lcg$ and $\|g\|_{L^{\infty}(\RnpO)} \in \ug$. We define the upper Perron solution $\ups$ and the lower Perron solution $\lps$ as
\begin{equation*}
    \ups(z) = \inf_{v \in \ug} \{ v(z)\}, \quad \quad \lps(z) = \sup_{u \in \lcg} \{ u(z) \}.
\end{equation*}

 For $0<\alpha <1$ it follows directly from the comparison principle that $\lps \leq \ups$. If there exists a viscosity solution $h_g$ such that $h_g(\xi_0)=g(\xi_0)$ for $\xi_0 \in \RnpO$, and $\lim_{\xi \to \xi_0} h_g(\xi) = h_g(\xi_0)$ for $\xi \in \Omega$ and $\xi_0 \in \partial \Omega$, it follows that $h_g \in \mathcal{U}_g \cap \mathcal{L}_g$. Thus, by definition $\ups \leq h_g \leq \lps$ and therefore $\lps = h_g=\ups$. In the next lemma we show that both the upper and the lower Perron solution are equal to the boundary function $g$ on $\RnpO$. The technique is similar to the proof of Theorem 9.2 in \cite{bbk2024perron}.
\begin{lem}\label{lem:Perronbdy}
    We define 
    \begin{align*}
        \underline{Q}_g &= \sup_{u \in \tilde{\mathcal{L}}_g} u \quad  \textup{where } \tilde{\mathcal{L}}_g = \{ u \in \lcg : u=g \textup{ on } \RnpO\},\\
        \overline{Q}_g &= \inf_{v \in \tilde{\mathcal{U}}_g} v \quad  \textup{where } \tilde{\mathcal{U}}_g = \{ v \in \ug : v=g \textup{ on } \RnpO\}.
    \end{align*}
    Then $\lps = \underline{Q}_g$ and $\ups = \overline{Q}_g$.
\end{lem}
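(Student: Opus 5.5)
One inequality in each claim is immediate. Since $\tilde{\mathcal{L}}_g \subset \lcg$, we have $\underline{Q}_g \leq \lps$, and likewise $\tilde{\mathcal{U}}_g \subset \ug$ gives $\overline{Q}_g \geq \ups$. It therefore remains to show $\lps \leq \underline{Q}_g$; the statement for $\ups$ follows by applying the same argument to $-g$, or by the symmetric argument with infima. The plan is to show that any $u \in \lcg$ can be replaced by a $\tilde u \in \tilde{\mathcal{L}}_g$ that agrees with $u$ in $\Omega$. Since $\lps$ and $\underline{Q}_g$ are compared pointwise, we need $\tilde u \geq u$ everywhere, in particular inside $\Omega$.

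Given $u \in \lcg$, define
\begin{equation*}
\tilde u = \begin{cases} u & \textup{in } \Omega,\\ g & \textup{in } \RnpO. \end{cases}
\end{equation*}
Then $\tilde u \geq u$ on $\Rnp$, because $u \leq g$ on $\RnpO$. The function $\tilde u$ is bounded, since $u$ and $g$ are, and it is continuous in $\Omega$. The boundary condition $\limsup_{\xi\to\xi_0,\,\xi\in\Omega}\tilde u(\xi)\le g(\xi_0)$ is unchanged, because it only involves values of $\tilde u$ inside $\Omega$. Also $\tilde u = g$ on $\RnpO$ by construction. Hence the only property left to check is that $\tilde u$ is a viscosity subsolution of \eqref{eq:Luf} in $\Omega$.

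This subsolution check is the key step. Take $(x_0,t_0)\in\Omega$, a neighbourhood $\Ns\subset\Omega$, and $\varphi\in C^1(\Ns)$ touching $\tilde u$ from above at $(x_0,t_0)$. Since $\tilde u=u$ on $\Ns$, the same $\varphi$ touches $u$ there, and $\partial_t\varphi(x_0,t_0)$ is the same in both cases. Let $\varphi_N$ be built with $u$ and $\tilde\varphi_N$ be built with $\tilde u$ outside $\Ns$. Then $\tilde\varphi_N \geq \varphi_N$ everywhere on $\Rnp$, with equality on $\Ns$ and in particular at $(x_0,t_0)$. Both $\sup_y$ and $\inf_y$ in \eqref{eq:infinitylap} are monotone in the values $u(y,t_0)$ with $u(x_0,t_0)$ fixed, so $\Linf\tilde\varphi_N(x_0,t_0)\geq\Linf\varphi_N(x_0,t_0)$. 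Consequently
\begin{equation*}
\frac{\partial\varphi}{\partial t}(x_0,t_0)-\Linf\tilde\varphi_N(x_0,t_0)\leq \frac{\partial\varphi}{\partial t}(x_0,t_0)-\Linf\varphi_N(x_0,t_0)\leq 0,
\end{equation*}
which is the subsolution inequality for $\tilde u$.

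Upper semicontinuity in $\Omega$ holds because $\tilde u=u$ on the open set $\Omega$. The growth condition \eqref{eq:growthcond} holds since $\tilde u$ is bounded. Thus $\tilde u\in\tilde{\mathcal{L}}_g$ and $u\leq\tilde u\leq\underline{Q}_g$; taking the supremum over $u\in\lcg$ gives $\lps\leq\underline{Q}_g$. The main obstacle is confirming that Definition \ref{def:viscsol} requires upper semicontinuity only inside $\Omega$, so that gluing in $g$ outside $\Omega$ creates no semicontinuity problem at $\partial\Omega$. The definition is stated that way, and the monotonicity of the operator with respect to the exterior values settles the rest. The supersolution case is symmetric: replace $v$ by $\min$-type gluing, $\tilde v=v$ in $\Omega$ and $\tilde v=g$ outside, which satisfies $\tilde v\leq v$.
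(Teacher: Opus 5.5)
Your proposal is correct and follows essentially the same route as the paper: you glue $g$ onto $u$ outside $\Omega$, observe that $\tilde u\ge u$, and use the monotonicity of $\Linf$ in the exterior values (with the value at $(x_0,t_0)$ fixed) to show that $\tilde u$ is still a subsolution, so $\tilde u\in\tilde{\mathcal{L}}_g$. Beyond the paper, you also spell out the trivial inequality $\underline{Q}_g\le\lps$ and the semicontinuity and growth checks, which the paper leaves implicit.
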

\begin{proof}
    We show the result for $\lps$. For $u \in \lcg$ we define
    \begin{align*}
         \tilde{u}= \begin{cases}
            u & \textup{in  } \ \Omega,  \\
            g & \textup{on  } \ \RnpO.
        \end{cases}
    \end{align*}
    We are going to show that $\tilde{u} \in \lcg$. The second and third conditions in Definition \ref{def:upperlowerclass} are trivially satisfied for $\tilde{u}$. We just need to show that $\tilde{u}$ is a subsolution to \eqref{eq:Luf}.  We consider $(x_0, t_0) \in \Omega$, and $\Ns$ such that $(x_0,t_0) \in \Ns \subset \Omega$, and let we let $\varphi \in C^1(\Ns)$ be a test function that touches $\tilde{u}$ strictly from above at $(x_0,t_0)$. Since $\varphi$ also touches $u$ from above at $(x_0,t_0)$ we define 
    \begin{align*}
     \varphi_N^u= \begin{cases}
                      \varphi & \textup{if  } \ (x,t) \in N,\\
                      u & \textup{if  } (x,t) \in \ \Rn   \setminus N.
            \end{cases}
 \end{align*}
 We note that $\varphi_N \geq \varphi^u_N$ and therefore $\Linf \varphi_N(x_0,t_0) \geq \Linf \varphi^u_N(x_0,t_0)$. Since $u$ is a subsolution it follows that 
 \begin{align*}
     \frac{\partial \varphi_N}{\partial t}(x_0,t_0)-\Linf \varphi_N(x_0,t_0) \leq \frac{\partial \varphi_N^u}{\partial t}(x_0,t_0)-\Linf \varphi_N^u(x_0,t_0) \leq 0.
 \end{align*}
 Thus, $\tilde{u}$ is a subsolution so $\tilde{u} \in \lcg$. Therefore,
 \begin{equation*}
     \lps = \sup_{u \in \lcg} u = \sup_{u \in \lcg} \tilde{u} = \sup_{v \in \tilde{\mathcal{L}}_g } v 
     =   \underline{Q}_g.
\end{equation*}
\end{proof}
 Note that from the comparison principle and the lemma it now follows that 
 $$\lps \leq \|g\|_{L^{\infty}(\RnpO)} \textup{ and } \ups \geq -\|g\|_{L^{\infty}(\RnpO)},$$ in $\Omega$.
\begin{lem}\label{lem:subissup}
$\lps$ (resp. $\ups$) is a viscosity supersolution (resp. subsolution) to $\eqref{eq:Luf}$ such that $\lps=g$ (resp. $\ups =g$) on $\RnpO$.
\end{lem}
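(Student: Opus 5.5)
We prove the statement for $\lps$; the case of $\ups$ follows by the symmetric argument (replace $u$ by $-u$ and $g$ by $-g$). The boundary identity $\lps = g$ on $\RnpO$ follows from Lemma \ref{lem:Perronbdy}. Indeed, $\lps = \underline{Q}_g$, and every $u \in \tilde{\mathcal{L}}_g$ equals $g$ there, so the supremum is $g$ on $\RnpO$. The family $\tilde{\mathcal{L}}_g$ is nonempty: the constant $-\|g\|_{L^\infty(\RnpO)}$, modified to equal $g$ off $\Omega$, belongs to it by the proof of Lemma \ref{lem:Perronbdy}. The real content is the supersolution property in $\Omega$.

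We use the standard Perron bump argument. We work with the lower semicontinuous envelope $(\lps)_*$ and suppose it is not a supersolution at some $(x_0,t_0)\in\Omega$. Then there exist $\varphi\in C^1(\Ns)$ touching $(\lps)_*$ from below at $(x_0,t_0)$ and $\theta>0$ with
\begin{equation*}
\frac{\partial\varphi}{\partial t}(x_0,t_0)-\Linf\varphi_N(x_0,t_0)\leq -2\theta<0.
\end{equation*}
By Remark \ref{rmk:strict_touching} we may assume the touching is strict. Here $\varphi_N$ is built with $(\lps)_*$ outside $N$.

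We perturb to $\psi=\varphi+\delta - \eta(|x-x_0|^4+|t-t_0|^2)$, with $\eta$ fixed and then $\delta$ small. The goal is a cylinder $Q_r(x_0,t_0)$ on which the function
\begin{equation*}
\psi_r = \begin{cases} \psi & \textup{in } Q_r,\\ (\lps)_* & \textup{outside } Q_r \end{cases}
\end{equation*}
satisfies $\partial_t\psi-\Linf\psi_r\le-\theta$ pointwise, classically, for all $(x,t)\in Q_r$. This uses continuity of $\partial_t\varphi$ together with the lower semicontinuity of $\Linf\psi_r$ from Lemma \ref{lem:Linfusc}. We also need that changing $\psi$ by $\delta$ alters $\Linf$ by at most $C\delta r^{-\alpha}$, exactly as in the estimate in the proof of Lemma \ref{lem:supsubissub}. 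We then take $\delta$ small relative to $\eta r^4$ and $\eta r^2$, so that $\psi<(\lps)_*$ near $\partial_p Q_r$ while $\psi(x_0,t_0)>(\lps)_*(x_0,t_0)$.

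Next we define $U=\max(\psi,\lps)$ in $Q_r$ and $U=\lps$ elsewhere. We check, essentially by the same touching argument as in Lemma \ref{lem:supsubissub}, that $U$ is a subsolution. Inside $Q_r$ where $U=\psi$, a test function touching $U$ from above touches $\psi$, and $\psi$ is a classical strict subsolution of $\partial_t\psi-\Linf\psi\le0$ since $U\ge$ its own extension. Where $U=\lps$, we use that $\lps$ is a supremum of subsolutions, via Lemma \ref{lem:supsubissub}. Since $\psi<\lps$ near $\partial Q_r$, $U$ agrees with $\lps$ near the boundary of the cylinder, so no boundary issues arise.

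The main obstacle is that $\lps$ itself may not be continuous and need not be in $\lcg$. To handle this, we replace $\lps$ by a single element $u\in\tilde{\mathcal{L}}_g$ with $u$ close to $\lps$ at a point where $\psi>u$. Then $\max(\psi,u)$, or a suitable continuous version of it, lies in $\lcg$: it is $\le g$ off $\Omega$ and has the right boundary limits, since it equals $u$ near $\partial\Omega$. It also exceeds $\lps$ at some point, contradicting the definition of $\lps$. Continuity in $\Omega$ (required by Definition \ref{def:upperlowerclass}) holds because the max of continuous functions is continuous.

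Finally, we conclude that $(\lps)_*$ is a supersolution. We also need $\lps=(\lps)_*$, or at least that the statement is interpreted for the envelope. For this we plan to combine the fact that $(\lps)^*$ is a subsolution (Lemma \ref{lem:supsubissub}) with the comparison principle, Proposition \ref{lem:compprinc}, applied later once barriers are available. At this stage we only claim the supersolution property for the lower envelope.
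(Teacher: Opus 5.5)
Your overall strategy is the paper's: argue by contradiction and lift the touching function above $\lps$ near $(x_0,t_0)$. However, two steps fail as written.

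\textbf{The nonlocal tail.} Your strict inequality $\partial_t\psi-\Linf\psi_r\le-\theta$ is computed with $(\lps)_*$ outside $Q_r$. The competitor you actually put into $\lcg$ is $\max(\psi,u)$, with a single $u\in\tilde{\mathcal{L}}_g$ (so $u\le\lps$) outside. Take a point where the competitor equals $\psi$. A test function touching it from above there only dominates the function equal to $\psi$ in $Q_r$ and to $u$ outside. Since $\Linf$ at a point is increasing in the values elsewhere, your inequality gives no lower bound for this smaller quantity. Indeed $\Lneg$ drops wherever $u$ is well below $\lps$ outside $Q_r$, and choosing $u$ close to $\lps$ at one point does not prevent that. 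For the same reason, $\psi<(\lps)_*$ near $\partial Q_r$ does not give $\psi<u$ there, so the glued function need not be continuous.

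The missing ingredient is the paper's $\varphi^{\epsilon}$:
\begin{itemize}
\item put $\lps-\epsilon$ (not $\lps$) outside the fixed neighbourhood $N\times I$, so the tail has slack;
\item use lower semicontinuity to get the strict inequality on a small ball $\mathcal{B}_r(x_0,t_0)$;
\item take a finite maximum $f$ of elements of $\tilde{\mathcal{L}}_g$ with $f>\varphi^{\epsilon}$ off that ball, and set $w=\max(f,\varphi^{\epsilon}+\delta)$.
\end{itemize}
Then $w\ge\varphi^{\epsilon}+\delta$ everywhere, which is exactly what makes the comparison of tails go the right way. Separating the patching set $N\times I$ from the bump ball also repairs your claim that the inequality holds on all of $Q_r$. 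Lower semicontinuity only gives it near $(x_0,t_0)$, and your $C\delta r^{-\alpha}$ bound is only valid on $Q_{r/2}$.

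Be careful with the covering step when you write this out. The paper appeals to continuity of $\varphi^{\epsilon}=\lps-\epsilon$ off $N\times I$, but that function is only known to be lower semicontinuous there.

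\textbf{The conclusion.} You end with a statement only about $(\lps)_*$, and propose to identify it with $\lps$ later by comparison once barriers are available. That is weaker than the lemma, which has no barrier hypothesis. No such step is needed. Every element of $\lcg$ is continuous in $\Omega$, so their pointwise supremum $\lps$ is lower semicontinuous in $\Omega$, and $(\lps)_*=\lps$ there. This is the first line of the paper's proof; after it, one works with $\lps$ directly.
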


\begin{proof}
    We prove the lemma for $\lps$. Since $\lps$ is the pointwise supremum of a collection of continuous functions, it is lower semicontinuous. Assume that $\lps$ is not a viscosity supersolution. Then there exists some $(x_0,t_0) \in \Omega $, some neighborhood $\Ns \subset \Omega$, such that $(x_0,t_0)\in \Ns$, and a test function $\varphi \in C^1(\Ns)$ that touches $\lps$ strictly from below at $(x_0,t_0)$ such that
    \begin{equation*}
        \frac{\partial \varphi}{\partial t}(x_0,t_0) - \Linf \varphi_{N}(x_0,t_0) < 0.
    \end{equation*}
    We note that we may assume that $\varphi \in C(\overline{N} \times \overline{I})$, because otherwise we can just consider a smaller neighbourhood. We next define
    \begin{align*}
   \varphi^{\epsilon} =  \begin{cases}
\varphi &  \textup{ in }  \Ns, \\
 \lps-\epsilon &  \textup{ in }  (N  \times I)^c,
\end{cases}
    \end{align*}
and note that 
\begin{equation*}
    \frac{\partial \varphi}{\partial t}(x_0,t_0) - \Linf \varphi^{\epsilon}(x_0,t_0) < 0,
\end{equation*}
for some small $\epsilon >0$.  From Lemma \ref{lem:Linfusc} we recall
    that since $\lps-\epsilon$ is lower semicontinuous, $\Linf \varphi^{\epsilon}(x,t)$ is lower semicontinuous in $\Ns$. Therefore, $\frac{\partial \varphi}{\partial t}(x,t) - \Linf \varphi^{\epsilon}(x,t)$ is upper semicontinuous in $\Ns$. Consequently, there exists some $\mathcal{B}_{r}(x_0, t_0) \subset \subset \Ns$ such that
 \begin{equation*}
   \frac{\partial \varphi}{\partial t}(x,t) - \Linf \varphi^{\epsilon}(x,t) < 0,  
   \end{equation*}
   for $(x,t)  \in \mathcal{B}_{r}(x_0,t_0)$. Since $\varphi^{\epsilon}<\lps$ in $\mathcal{B}_{r}^c(x_0,t_0)$, it follows that for each $z \in \mathcal{B}_{r}^c(x_0,t_0)  \cap \overline{\Omega}$ there exists a $f_z \in \lcg \cap \tilde{\mathcal{L}}_g$ such that $f_z(z) > \varphi^{\epsilon}(z)$. Due to the continuity of the functions $f_{z}$ and $\varphi^{\epsilon}$, there exists a radius $s_{z}>0$ such that $f_{z} > \varphi^{ \epsilon}$ in $ \mathcal{B}_{s_{z}}(z)$. We note that we can cover the whole $\mathcal{B}_{r}^c(x_0,t_0)  \cap \overline{\Omega}$ with similar balls. Since $\mathcal{B}_{r}^c(x_0,t_0)  \cap \overline{\Omega}$ is a compact set, we can choose a finite subcover i.e., there exists $\{ z_1,\hdots, z_M \}$ and $\{s_{z_1}, \hdots, s_{z_M} \}$ such that 
    $$
    \mathcal{B}_{r}^c(x_0,t_0)  \cap \overline{\Omega} \subset \cup_{i=1}^M \mathcal{B}_{s_{z_i}}(z_i).
    $$
    We now choose
    \begin{equation*}
        f(x,t) = \max\{f_{z_1}(x,t), f_{z_2}(x,t), \hdots, f_{z_M}(x,t) \},
    \end{equation*}
    and note that by definition $f>\varphi^{\epsilon}$ in $\mathcal{B}_r^c(x_0,t_0) \cap \overline{\Omega}$ and $f-\varphi^{\epsilon}= \epsilon$ in $\Omega^c$. Thus, $f > \varphi^{\epsilon}$ on $\mathcal{B}_{r}^c(x_0,t_0)$. Furthermore, $f$ is a subsolution by Lemma \ref{lem:subissup}. We next define
    \begin{equation*}
        \delta = \min_{\Rnp \setminus \mathcal{B}_{r}(x_0,t_0)}(f-\varphi^{\epsilon}),
    \end{equation*}
    and note that $\delta >0$ because of the definition of $f$. We consider
    \begin{align*}
     w= \begin{cases}
            \max(f, \varphi^{\epsilon}+\delta) & \textup{in  } \ \mathcal{B}_r(x_0,t_0),  \\
            f & \textup{in  } \ \Rnp \setminus \mathcal{B}_r(x_0,t_0).
        \end{cases}
    \end{align*}
    We will now show that $w$ is a subsolution. Let $\xi_0 \in \Omega$ and let $\tilde{N} \times \tilde{I}$ be a neighbourhood of $\xi_0$. Consider $\psi \in C^1(\tilde{N} \times \tilde{I})$ that touches $w$ from above at $\xi_0$. If $w(\xi_0)=f(\xi_0)$, then since $w \geq f$, $\psi$ touches $f$ from above at $\xi_0$. Since $f$ is a subsolution, the subsolution property holds at $\xi_0$. If, on the other hand, $w(\xi_0)= \varphi^{\epsilon}(\xi_0)+\delta$, then $\psi$ touches $\varphi^{\epsilon} + \delta$ from above since $w \geq \varphi^{\epsilon}+\delta$ by the choice of $w$ and $\delta$. Since $\varphi^{\epsilon}$ is a subsolution in $\mathcal{B}_r(x_0,t_0)$, the subsolution property holds at $\xi_0$ also in this case. It is now clear that $w \in \lcg$, and therefore, $w \leq \lps$. However, since $\varphi^{\epsilon}$ touches $\lps$ at $(x_0,t_0)$,
    \begin{equation*}
        w(x_0,t_0) \geq \varphi^{\epsilon}(x_0,t_0)+\delta >\lps(x_0,t_0),
    \end{equation*}
    which is a contradiction. Thus, the lemma is proved.  
\end{proof}

\section{Barriers}\label{sec:barriers}
For a continuous and bounded function $g: \Rnp \setminus \Omega \to \R$, with a limit at infinity, we consider the boundary value problem 
\begin{align*} 
\begin{cases}
 \frac{\partial u}{\partial t} = \mathcal{L}_{\infty}u &  \textup{ in }  \Omega, \\
 u = g &  \textup{ on }  \RnpO.
\end{cases}
\end{align*}
In this section, we are going to show the existence of solutions to the boundary value problem above using barriers. In Section \ref{sec:Perronbarrier} we define regular boundary points and show the classic result that if there exists a barrier for each boundary point and $0<\alpha<1$, then there exists a unique viscosity solution, $u$, to \eqref{eq:Luf} such that $u=g$ on $\RnpO$. In Section \ref{sec:barrierscyl} we construct barriers for the boundary points of the cylinder, defined in \eqref{eq:cyl}. 

\subsection{Perron's solutions and regular boundary points}\label{sec:Perronbarrier}

\begin{definition}\label{def:regular}
 Let $g: \Rnp \setminus \Omega \to \R$ be any continuous and bounded function with a limit at infinity. A boundary point $\xi_0 \in \partial \Omega$ is called regular if 
\begin{equation}\label{eq:regular}
    \lim_{\xi \in \Omega: \xi \to \xi_0} \ups(\xi)=g(\xi_0)= \lim_{\xi \in \Omega: \xi \to \xi_0} \lps(\xi).
\end{equation} 
\end{definition}

We note that since $\lps(\xi_0) =g(\xi_0)$ for all $\xi_0 \in \RnpO$, and $g$ is continuous, it immediately follows from Definition \ref{def:regular} that $\lps$ is continuous at $\xi_0$. Therefore, $(\lps)^*(\xi_0) = \lim_{r \to 0} \sup_{\xi \in B_r(\xi_0)} \lps(\xi) = g(\xi_0)$. The same argument also applies to $\ups$, and thus,
\begin{equation}\label{eq:regular2}
    \lim_{\xi \to \xi_0} (\lps)^*(\xi_0) = \lim_{\xi \to \xi_0} \lps(\xi_0) =g(\xi_0)= \lim_{\xi \to \xi_0} \ups(\xi_0)  = \lim_{\xi \to \xi_0} (\ups)_*(\xi_0).
\end{equation}
 Next, we state the definition of barriers.
\begin{definition}\label{def:barrier}
   We let $\xi_0 \in \partial \Omega $. A function $\psi$ is a barrier for $\xi_0$ if 
   \begin{enumerate}
       \item $\psi \geq 0$, $\psi \in C(\Omega)$, and $\psi$ is a viscosity supersolution to \eqref{eq:Luf} ,
       \item For all $\delta >0$, $\inf \psi(\xi) >0$ for $ \xi \in \Rnp \setminus B_{\delta}(\xi_0)$,
       \item $ \psi(\xi_0)=0$.
   \end{enumerate}
\end{definition}
In the next proposition, we prove the classic result that a boundary point $\xi_0 \in \partial \Omega$ is regular if there exists a barrier at $\xi_0$.
\begin{prop}\label{prop:regular}
    Let $0<\alpha<1$ and $\xi_0 \in \partial \Omega $ be a boundary point. If there exists a barrier at $\xi_0$, then $\xi_0$ is a regular boundary point. 
\end{prop}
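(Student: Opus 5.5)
The plan is the classical barrier argument. From the barrier $\psi$ at $\xi_0$ we build explicit members of $\ug$ and $\lcg$ that squeeze the Perron solutions to $g(\xi_0)$ near $\xi_0$. Fix $\epsilon>0$. By continuity of $g$ there is $\delta>0$ with $|g(\xi)-g(\xi_0)|<\epsilon$ for $\xi\in(\Rnp\setminus\Omega)\cap \mathcal{B}_\delta(\xi_0)$. By property 2 of Definition \ref{def:barrier}, $m_\delta:=\inf_{\Rnp\setminus \mathcal{B}_\delta(\xi_0)}\psi>0$. Set $M=\|g\|_{L^\infty(\RnpO)}$ and define
\begin{equation*}
v(\xi)=g(\xi_0)+\epsilon+\frac{2M}{m_\delta}\psi(\xi),\qquad u(\xi)=g(\xi_0)-\epsilon-\frac{2M}{m_\delta}\psi(\xi).
\end{equation*}

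We first check that $v\in\ug$. Since the equation \eqref{eq:Luf} is invariant under adding constants and under multiplication by positive constants (the operator $\Linf$ is positively $1$-homogeneous and translation invariant in $u$, and the test-function class scales accordingly), $v$ is a viscosity supersolution and $v\in C(\Omega)$. The growth condition is inherited from $\psi$. Boundedness of $\psi$ should be assumed or arranged, for instance by replacing $\psi$ with $\min(\psi,1)$. That the minimum of two supersolutions is a supersolution follows as in Lemma \ref{lem:supsubissub}, and the constant $1$ is a supersolution, so this replacement is harmless.

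For the boundary inequalities, consider $\xi\in \RnpO$. If $\xi\in\mathcal{B}_\delta(\xi_0)$, then $v(\xi)\ge g(\xi_0)+\epsilon> g(\xi)$ because $\psi\ge0$. Otherwise $\psi(\xi)\ge m_\delta$, so $v(\xi)\ge g(\xi_0)+2M\ge g(\xi)$ since $g(\xi_0)\ge -M$; with the truncation we take the coefficient $2M/\min(m_\delta,1)$. The $\liminf$ condition at points of $\partial\Omega$ follows the same way. Here we use lower semicontinuity of $\psi$ up to $\partial\Omega$, or else interpret $\psi$ as defined on all of $\Rnp$ with the infimum condition giving the bound directly for interior points near a boundary point outside $\mathcal{B}_\delta(\xi_0)$. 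Symmetrically, $-\psi$ is a subsolution, which gives $u\in\lcg$.

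Therefore $\ups\le v$ and $\lps\ge u$ in $\Omega$. Since $\psi(\xi_0)=0$, we need $\limsup_{\xi\to\xi_0}\psi(\xi)=0$. If $\psi$ is continuous at $\xi_0$ (this is the intended reading), we get
\begin{equation*}
\limsup_{\xi\to\xi_0}\ups\le g(\xi_0)+\epsilon,\qquad \liminf_{\xi\to\xi_0}\lps\ge g(\xi_0)-\epsilon.
\end{equation*}
Next, the comparison principle (Proposition \ref{lem:compprinc}) gives $\lps\le\ups$; this is where $\alpha<1$ is used. Combining, $g(\xi_0)-\epsilon\le\liminf\lps\le\limsup\ups\le g(\xi_0)+\epsilon$ along $\xi\in\Omega$, $\xi\to\xi_0$. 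To get the lower bound on $\ups$ and the upper bound on $\lps$ as well, apply the same estimate with the roles reversed, using $\lps\le\ups$. Letting $\epsilon\to0$ yields \eqref{eq:regular}.

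The main obstacle is the technical verification that $v$ genuinely lies in $\ug$. This requires the regularity, boundedness and growth properties of $\psi$ near $\partial\Omega$ and at $\xi_0$, and the invariance of viscosity supersolutions under $a+b\psi$ with $b>0$, including the modified test function $\varphi_N$. I would verify the latter directly from Definition \ref{def:viscsol}: if $\varphi$ touches $v$ from below, then $(\varphi-a)/b$ touches $\psi$ from below, and $\Linf$ scales linearly. The application of comparison to obtain $\lps\le\ups$ needs the hypotheses of Proposition \ref{lem:compprinc}, which hold for members of the classes and pass to the infimum and supremum pointwise.
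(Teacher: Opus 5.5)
Your proposal is essentially the paper's proof. The paper also puts $g(\xi_0)\pm\epsilon\pm M\psi$ into $\ug$ and $\lcg$ and combines the resulting bounds on $\lps$, $\ups$ near $\xi_0$ with $\lps\le\ups$; your truncation $\min(\psi,1)$ (for membership in the bounded classes) and your explicit use of continuity of $\psi$ at $\xi_0$ make precise what the paper uses tacitly.

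The one thing to tighten is the $\liminf$ condition at points $\tilde\xi\in\partial\Omega$ with $|\tilde\xi-\xi_0|=\delta$. Such points can be approached from inside $\mathcal{B}_\delta(\xi_0)$, where $\psi\ge m_\delta$ is unavailable and your continuity estimate for $g$ does not reach. Rather than assuming lower semicontinuity of $\psi$ at $\partial\Omega$, use two radii as the paper does: continuity of $g$ on $\mathcal{B}_{\delta_1}(\xi_0)$ and the infimum of $\psi$ outside $\mathcal{B}_{\delta_2}(\xi_0)$ with $\delta_2<\delta_1$. Then every boundary point either lies in $\mathcal{B}_{\delta_1}(\xi_0)$ or has a whole neighbourhood outside $\mathcal{B}_{\delta_2}(\xi_0)$.
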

\begin{proof} 
 We choose $\epsilon >0$, $\xi_0 \in  \partial \Omega $ and note that  
  \begin{enumerate}
      \item  there exists a $\delta_1 >0$ such that $|g(\xi)-g(\xi_0)| < \epsilon$ for $\xi \in B_{\delta_1}(\xi_0) \cap (\RnpO)$,
      \item let $0<\delta_2 < \delta_1$. We can choose a constant $M$ large enough so that 
      $$
      M \inf_{\xi \in \Rnp \setminus B_{\delta_2}(\xi_0)}\psi(\xi) > 2 \sup |g|.
      $$
  \end{enumerate}
  The first point follows since $g$ is continuous, and the second point follows from the second condition in Definition \ref{def:barrier}. We define the function 
  \begin{equation*}
      w^-(\xi)= g(\xi_0)-\epsilon-M \psi(\xi), 
  \end{equation*}
  and we are going to show that $w^-\in \lcg$. It is clearly a continuous viscosity subsolution and points 1) and 2) implies that $w^- \leq g$ on $\RnpO$.  We consider $\tilde{\xi}\in \partial \Omega$. From the definition of $w^-$ it follows that $w^-(\xi) \leq g(\xi_0)-\epsilon$ and we note that this implies that
\begin{equation}\label{eq:limsup}
    \limsup_{\xi \to \tilde{\xi}} w^-(\xi) \leq g(\xi_0)-\epsilon.
\end{equation}
When $\tilde{\xi} \in B_{\delta_2}(\xi_0) \cap \partial \Omega$ it follows from \eqref{eq:limsup} and 1) that $ \limsup_{\xi \to \tilde{\xi}} w^-(\xi) \leq g(\tilde{\xi})$.  We next consider $\tilde{\xi } \in (\Rnp \setminus B_{\delta_2}(\xi_0)) \cap \partial \Omega$. Due to point 2) above,
  \begin{equation}\label{eq:wsup}
      w^-(\xi) \leq -\sup |g|-\epsilon,
  \end{equation}
when $\xi \in \Rnp \setminus B_{\delta_2}(\xi_0)$ and thus $ \limsup_{\xi \to \tilde{\xi}} w^-(\xi) \leq g(\tilde{\xi})$. Thus, $w^- \in \lcg$ so $\lps \geq w^-$. Furthermore, since $ \psi(\xi_0) = 0$, there exists some small $r>0$ such that $w^- \geq g(\xi_0)-2 \epsilon$ in $B_r(\xi_0) \cap \overline{\Omega}$. Thus, 
    \begin{equation}\label{eq:limlps}
       \liminf_{\xi \to \xi_0} \lps(\xi) \geq  g(\xi_0)-2\epsilon. 
    \end{equation}
We next define the function $w^+(\xi)= g(\xi_0)+\epsilon+M \psi(\xi)$
    and due to a similar argument as above we see that $w^+ \in \ug$. It follows that
    \begin{equation}\label{eq:limups}
        \limsup_{\xi \to \xi_0} \ups(\xi) \leq g(\xi_0)+2\epsilon.
    \end{equation}
    By combining \eqref{eq:limlps} and \eqref{eq:limups} and using that $\lps \leq \ups$ the proposition is proved.
\end{proof}

Next, we are going to prove the main theorem of this paper. Before that, we introduce the set of boundary points $\partial \tilde{\Omega}$ that contains all $(x_0,t_0) \in \partial \Omega$ with the possible exceptions of points for which $t_0=T.$ The reason that we do not necessarily need the ``top'' points is due to the way that the comparison principle is formulated.
\begin{thm}\label{thm:existence}
    Let $0<\alpha<1$ and let $g: \Rnp \setminus \Omega \to \R$ be a continuous and bounded function with a limit at infinity. If there exists a barrier for each boundary point $\xi_0 \in \partial \tilde{\Omega}$, then $\lps = \ups :=u$, where $u$ is a unique viscosity solution to \eqref{eq:Luf} such that $u=g$ on $\RnpO$ and $\lim_{\xi \to \xi_0}u(\xi)=g(\xi_0)$ for each $\xi_0 \in \partial \tilde{\Omega}$.
\end{thm}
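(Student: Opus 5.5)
The plan is to combine Proposition \ref{prop:regular}, Lemma \ref{lem:subissup} and the comparison principle, Proposition \ref{lem:compprinc}.

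First, by Proposition \ref{prop:regular}, every $\xi_0 \in \partial\tilde\Omega$ is regular. Hence \eqref{eq:regular} and \eqref{eq:regular2} hold at each such point: both $\lps$ and $\ups$, together with their semicontinuous envelopes, tend to $g(\xi_0)$ as $\xi \to \xi_0$ from $\Omega$.

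Second, I would use the envelopes. By Lemma \ref{lem:supsubissub}, $(\lps)^*$ is a viscosity subsolution in $\Omega$, since $\lps$ is a supremum of subsolutions. Likewise $(\ups)_*$ is a viscosity supersolution. Lemma \ref{lem:Perronbdy} lets me restrict to members equal to $g$ on $\RnpO$. Since $g$ is continuous there, I expect $(\lps)^* = g = (\ups)_*$ on $\RnpO$ away from $\partial\Omega$. At boundary points in $\partial\tilde\Omega$ this follows from regularity.

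Third, I apply Proposition \ref{lem:compprinc} with $u = (\lps)^*$ and $v = \lps$. By Lemma \ref{lem:subissup}, $\lps$ is a supersolution; it is lower semicontinuous as a supremum of continuous functions, and it is bounded by $\|g\|_\infty$.
\begin{itemize}
\item The exterior condition holds because both functions equal $g$ on $\RnpO$.
\item The lateral and bottom boundary condition $\limsup (\lps)^* \le \liminf \lps$ at points of $\partial\Omega \cap (\Rn\times[T_0,T))$ follows from regularity, since both limits equal $g(\xi_0)$. The comparison principle excludes only the top points $t_0 = T$, which is exactly why $\partial\tilde\Omega$ suffices.
\item The condition at $|x|\to\infty$ follows since for large $|x|$ we are outside the bounded set $\Omega$, both equal $g$ there, and $g$ has a limit at infinity.
\end{itemize}
This gives $(\lps)^* \le \lps$, so $\lps$ is upper semicontinuous, hence continuous, and it is a subsolution. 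Symmetrically, comparing $\ups$ (a subsolution by Lemma \ref{lem:subissup}) with $(\ups)_*$ shows that $\ups$ is continuous and a supersolution.

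Finally, I compare the subsolution $\lps$ (which I now know is one) with the supersolution $\ups$, obtaining $\lps \le \ups$; this was already noted earlier in the paper. For the reverse inequality, compare the subsolution $\ups$ with the supersolution $\lps$. Both agree with $g$ outside $\Omega$ and have boundary limits $g(\xi_0)$ on $\partial\tilde\Omega$, so Proposition \ref{lem:compprinc} gives $\ups \le \lps$.

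Hence $u := \lps = \ups$ is a continuous viscosity solution with $u = g$ on $\RnpO$ and the correct boundary limits. Uniqueness follows by applying the comparison principle both ways to any two such solutions.

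The main obstacle is verifying the hypotheses of the comparison principle for the envelopes. One must check that $(\lps)^*$ is bounded and satisfies the growth condition, and that the envelopes do not spoil the exterior data near $\partial\Omega$. There, the envelope of $\lps$ taken over all of $\Rnp$ could pick up values from inside $\Omega$. I would handle this by applying the comparison only with the limsup taken from inside $\Omega$, as stated in Proposition \ref{lem:compprinc}, and by using regularity for points on $\partial\tilde\Omega$.
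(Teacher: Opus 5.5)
Your proposal is correct and follows the same route as the paper. You use regularity of $\partial\tilde{\Omega}$ via Proposition~\ref{prop:regular}. You compare $(\lps)^*$ with $\lps$, and $\ups$ with $(\ups)_*$, to show each Perron solution is a viscosity solution with the right boundary values. A final application of the comparison principle then gives $\lps=\ups$ and uniqueness.

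The technical point you flag, that the envelopes may not equal $g$ near $\partial\Omega$ (for instance at the top points $t_0=T$, which regularity does not cover), is also left unaddressed in the paper. The paper simply asserts $(\lps)^*=g$ on $\RnpO$.
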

\begin{proof}
   It follows from Lemma \ref{lem:supsubissub} and Lemma \ref{lem:Perronbdy} that $(\lps)^*$ is a viscosity subsolution to \eqref{eq:Luf} such that $(\lps)^*=g$ on $D_{\Omega}\setminus \Omega$. Furthermore, from Proposition \ref{prop:regular} we know that all boundary points $\xi_0 \in \partial \tilde{\Omega}$ are regular, and from \eqref{eq:regular2} we recall that this implies that $\lim_{\xi \to \xi_0}(\lps)^*(\xi) =g(\xi_0)$ for all $\xi_0 \in \partial \tilde{\Omega}$. In addition,  by Lemma \ref{lem:subissup} $\lps$ is a viscosity supersolution to \eqref{eq:Luf} such that $\lps = g$ on $D_{\Omega}\setminus \Omega$. From \eqref{eq:regular2} we know that $\lim_{\xi \to \xi_0}\lps(\xi)=g(\xi_0)$ for all $\xi_0 \in \partial \tilde{\Omega}$. By the comparison principle, it follows that $(\lps)^* \leq \lps$ in $\Omega$, i.e., $(\lps)^* = \lps$ in $\Omega$. Thus, $\lps$ is a viscosity solution to \eqref{eq:Luf} such that $\lps=g$ on $\RnpO$ and $\lim_{\xi \to \xi_0}\lps(\xi)=g(\xi_0)$ for $\xi_0 \in \partial \tilde{\Omega}$. We can apply a similar argument to see that $\ups$ is also a viscosity solution and that $\ups=g$ on $\RnpO$ and $\lim_{\xi \to \xi_0}\ups(\xi)=g(\xi_0)$ for $\xi_0 \in \partial \tilde{\Omega}$. It follows from the comparison principle that they coincide.
\end{proof}
\subsection{Semibarriers}\label{subsec:semibarriers}
We are also going to define so called semibarriers. The difference between a semibarrier and a barrier is that a semibarrier is not necessarily continuous.
\begin{definition}\label{def:semibarriers}
We let $\xi_0 \in \partial \Omega$. A function $\psi$ is a semibarrier for $\xi_0$ if 
\begin{enumerate}
       \item $\psi \geq 0$, and $\psi$ is a viscosity supersolution to \eqref{eq:Luf} ,
       \item For all $\delta >0$, $\inf \psi(\xi) >0$ for $ \xi \in \Rnp \setminus B_{\delta}(\xi_0)$,
       \item $\lim_{\xi \to \xi_0} \psi(\xi)=0$.
 \end{enumerate}
\end{definition}
\begin{lem}\label{lem:cones2}
Let $0<\alpha \leq 1$. If $\xi_0=(x_0,t_0) \in \partial \Omega$ is regular, then there exists a semibarrier for $\xi_0$. 
\end{lem}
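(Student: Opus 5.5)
Following the classical argument for Perron's method (as in \cite{bbk2024perron} and the theory for the $p$-parabolic equation), the plan is to obtain the semibarrier as an upper Perron solution for a well-chosen boundary datum. We choose a continuous, bounded function $g$ on $\RnpO$ with a limit at infinity that vanishes at $\xi_0$ and is strictly positive away from it, for example
\begin{equation*}
    g(\xi) = \min\bigl(|\xi-\xi_0|, 1\bigr), \qquad \xi \in \RnpO .
\end{equation*}
This $g$ is continuous, bounded by $1$, and equal to $1$ for $|x|$ large, so it has a limit at infinity. We then set $\psi := \ups$, the upper Perron solution for this $g$. We define $\psi$ on all of $\Rnp$; by Lemma \ref{lem:Perronbdy} we may assume $\psi = g$ on $\RnpO$.

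\textbf{Step 1 (supersolution and nonnegativity).} By Lemma \ref{lem:supsubissub}, $(\ups)_*$ is a supersolution, and by Lemma \ref{lem:subissup} (applied symmetrically), $\ups$ is a subsolution. What we need is that $\psi$ itself is a supersolution. Since semibarriers need not be continuous, we take $\psi := (\ups)_*$. This is a lower semicontinuous supersolution, and it still equals $g$ on $\RnpO$ because $g$ is continuous and every $v \in \ug$ satisfies $v\ge g$. For nonnegativity, the constant $0$ belongs to $\lcg$ because $g \ge 0$. For $\alpha<1$ the comparison principle then gives $\ups \ge \lps \ge 0$. For $\alpha = 1$, where comparison is not available, we argue directly instead: each $v \in \ug$ can be replaced by $\max(v,0)$. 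I expect the minimum of two supersolutions (respectively the max of two subsolutions) to stay in the class, so the infimum is over nonnegative functions.

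\textbf{Step 2 (positivity away from $\xi_0$).} Fix $\delta>0$. We need $\inf \psi > 0$ on $\Rnp\setminus B_\delta(\xi_0)$. On $\RnpO$ this holds because $g \ge \min(\delta,1)$ there. Inside $\Omega$ away from $\xi_0$, I would compare with an explicit subsolution in $\lcg$ that is positive there. A natural candidate is $c\,\min(|\xi-\xi_0|,1)$-type data, or $\lps$ for the datum $\min(g,\delta')$ with a small constant subtracted. The main obstacle is this step: $\Omega$ is arbitrary, so one cannot simply use constants. The plan is to use that $\xi_0$ is the only zero of $g$. Consider the lower Perron solution for the datum $\tilde g = \min(g,\delta/2)$, together with a time-shifted or cut-off quadratic built from Lemma \ref{lem:mincone}. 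Such a function is a strict subsolution in a neighborhood of $\xi_0$ and equals $\delta/2$ elsewhere. If that fails, we can weaken the construction by taking $\psi = \sum_k 2^{-k}\ups^{(k)}$, with $g_k=\min(k|\xi-\xi_0|,1)$. Positivity then only needs to hold on sets where the data are positive, and one checks that the sum of supersolutions is still a supersolution via the sup/inf structure of $\Linf$.

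\textbf{Step 3 (limit zero at $\xi_0$).} Since $\xi_0$ is regular, Definition \ref{def:regular} applied to our $g$ gives $\lim_{\xi\to\xi_0,\,\xi\in\Omega}\ups(\xi) = g(\xi_0) = 0$. On $\RnpO$ we have $\psi = g \to 0$ as well. Finally, $0 \le (\ups)_* \le \ups$, so $\lim_{\xi\to\xi_0}\psi(\xi)=0$. Together with Steps 1 and 2, this shows that $\psi$ is a semibarrier at $\xi_0$.
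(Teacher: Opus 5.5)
There is a genuine gap, and it sits in Step 2, which is where the whole content of the lemma lies. To get $\inf\psi>0$ outside $B_\delta(\xi_0)$ you need an explicit function that is a subsolution in $\Omega$, vanishes at $\xi_0$, and is bounded below by a positive constant away from $\xi_0$. None of your candidates is actually constructed or checked. Also, because $(\overline{H}_g)_*$ is a lower envelope, it need not equal $g$ at points of $\partial\Omega$, so even your bound on $D_\Omega\setminus\Omega$ depends on an interior bound.

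Your fallback $\sum_k 2^{-k}\overline{H}_{g_k}$ fails too. $\mathcal{L}_\infty^+$ is subadditive while $\mathcal{L}_\infty^-$ is superadditive, so there is no inequality between $\mathcal{L}_\infty(u+v)$ and $\mathcal{L}_\infty u+\mathcal{L}_\infty v$, and a sum of supersolutions need not be a supersolution.

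The missing ingredient is the paper's explicit function $\varphi(x,t)=\min(|x-x_0|^2,R^2)+k_0(t-t_0)^2$. By Lemma \ref{lem:mincone} and $h(\alpha)\le 2$, taking $R$ large in terms of $\textup{diam}(\Omega_E)$ gives $\mathcal{L}_\infty\varphi\ge 1$ in $\Omega$. Taking $2k_0(T-T_0)<1$ then makes $\varphi$ a continuous classical (hence viscosity) subsolution. It is nonnegative, vanishes only at $\xi_0$, and is bounded below by a positive constant outside every $B_\delta(\xi_0)$.

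A second problem is that working with the upper Perron solution forces you through the comparison principle. You need $v\ge 0$, or $v\ge$ some positive subsolution, for every $v\in\mathcal{U}_g$. Proposition \ref{lem:compprinc} is proved only for $\alpha<1$, but the lemma claims $0<\alpha\le 1$.

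Your workaround for $\alpha=1$ does not work. Replacing $v$ by $\max(v,0)$ requires the \emph{maximum} of two supersolutions to be a supersolution. That fails in general; it is the minimum that is preserved. In any case $\inf_{v}\max(v,0)=\max(\overline{H}_g,0)$, about which nothing is known.

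The paper avoids comparison entirely by taking the \emph{lower} Perron solution $\underline{H}_\varphi$ with the datum $\varphi$ itself. Since $\varphi\in\mathcal{L}_\varphi$, one has $\underline{H}_\varphi\ge\varphi\ge 0$ by definition, which gives nonnegativity and condition 2 of Definition \ref{def:semibarriers} at once. Lemma \ref{lem:subissup} gives that $\underline{H}_\varphi$ is a supersolution, and regularity of $\xi_0$ gives $\lim_{\xi\to\xi_0}\underline{H}_\varphi(\xi)=\varphi(\xi_0)=0$.

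For $\alpha<1$ your route can be repaired. Insert $c\varphi\in\mathcal{L}_g$ with $c>0$ small enough that $c\varphi\le g$, and invoke comparison to get $(\overline{H}_g)_*\ge c\varphi$. As written, though, the decisive step is missing, and for $\alpha=1$ the approach does not go through.
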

\begin{proof}
    We define the function
\begin{equation}
    \varphi(x,t)= \min(|x-x_0|^2, R^2) + k_0(t-t_0)^2,
\end{equation}
where $k_0 >0$ and $ R \in \R$ are constants to be decided. At first we are going to show that $\varphi(x,t)$ is a subsolution in $\Omega$ and we therefore consider $(x,t) \in \Omega$. From Lemma \ref{lem:mincone} we recall that 
    \begin{align*}
     \Linf \min(|x-x_0|^2, R^2)= 
                      \left(R+|x-x_0| \right) \left(R-|x-x_0| \right)^{1-\alpha} - |x-x_0|^{2-\alpha} h(\alpha)
     \end{align*} 
     at each $x \in B_R(x_0)$,
     where 
     $$
     h(\alpha) = \left(1+\frac{\alpha}{2-\alpha} \right)\left(1-\frac{\alpha}{2-\alpha} \right)^{1-\alpha},
     $$
     and $1<h(\alpha)\leq2$ for $0<\alpha \leq1$. We choose 
     \begin{align*}
         R \geq \textup{max}\left(2\textup{diam}(\Omega_E)^{2-\alpha}+1, \textup{diam}(\Omega_E) +1\right),
     \end{align*}
     where  $\On$ is defined in \eqref{eq:projx}. With this choice of $R$ it follows that
     \begin{align*}
        \Linf \varphi(x,t) &\geq \left(R+|x-x_0| \right)\left(R-|x-x_0| \right)^{1-\alpha}-2|x-x_0|^{2-\alpha} \\
        & \geq R(R-\textup{diam}(\Omega_E))^{1-\alpha}-2\textup{diam}(\Omega_E)^{2-\alpha} \\ 
        & \geq R - 2\textup{diam}(\Omega_E)^{2-\alpha}  \\
        & \geq 1.
     \end{align*}     
       We next choose $k_0$ so small that $2k_0 (T-T_0) < 1 $, to conclude that $\varphi$ is a subsolution. We note that since $\varphi$ is a continuous subsolution, $\varphi$ belongs to $\mathcal{L}_{\varphi}$. Next, we are going to show that the function $\underline{H}_{\varphi}$ is a semibarrier at $\xi_0$. First, $\underline{H}_{\varphi}$ is a supersolution according to Lemma \ref{lem:subissup}. In addition, since $\underline{H}_{\varphi} \geq \varphi$ by definition, it is clear that the second condition in Definition \ref{def:semibarriers} is satisfied.
        Finally, since $\xi_0$ is a regular point it follows that
       $\lim_{\xi \to \xi_0} \underline{H}_{\varphi}(\xi)=\varphi(\xi_0) =0$, and thus the third condition is also satisfied. Thus, $\underline{H}_{\varphi}$ is a semibarrier for $\xi_0$.
\end{proof}

\subsection{Barriers for the cylinder}\label{sec:barrierscyl}
In this section, we let $E \subset \Rn$ and let $\cyl = E \times (T_0,T)$, for some $T_0, T \in \R$ denote the corresponding cylinder. We are considering the boundary points
\begin{equation*}
    \partial \tilde{Q}_T = \overline{E} \times \{T_0\} \cup \partial E \times (T_0, T]
\end{equation*}
Note that we do not consider the boundary points $E \times \{T\}$.
\begin{lem}\label{lem:barrierscyl}
    Let $0 < \alpha \leq 1$. There exist barriers for all $(x_0,t_0) \in \partial \tilde{Q}_T$.
\end{lem}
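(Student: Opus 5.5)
For the points on the bottom $\overline{E}\times\{T_0\}$ and for the points on the side $\partial E\times(T_0,T]$ I will write down explicit functions $\psi\ge 0$ that vanish only at $\xi_0$ and are \emph{classical} supersolutions in $\cyl$. They will be $C^1$ in $\cyl$ and continuous on $\Rnp$. Each such $\psi$ is then a viscosity supersolution by a monotonicity argument. Suppose $\varphi\in C^1(\Ns)$ touches $\psi$ from below at $(x,t)$. Then $\varphi_N\le\psi$ with equality at $(x,t)$, so both the supremum and the infimum in \eqref{eq:infinitylap} can only decrease. Hence $\Linf\varphi_N(x,t)\le\Linf\psi(x,t)$ and $\partial_t\varphi(x,t)=\partial_t\psi(x,t)$, which gives $\partial_t\varphi-\Linf\varphi_N\ge\partial_t\psi-\Linf\psi\ge 0$. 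The growth condition \eqref{eq:growthcond} is trivial, since all the functions below are bounded. Condition 2 of Definition \ref{def:barrier} will follow in each case from the explicit form of $\psi$.

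\emph{Bottom points} $\xi_0=(x_0,T_0)$. Take $\psi(x,t)=C(t-T_0)+\min(|x-x_0|^2,R^2)$ with $R>\textup{diam}(E)$. By Lemma \ref{lem:mincone}, inside $B_R(x_0)$ we have $\Linf\min(|x-x_0|^2,R^2)\le (R+|x-x_0|)(R-|x-x_0|)^{1-\alpha}\le 2R^{2-\alpha}$. Outside $B_R(x_0)$ the function attains its maximum, so $\Lpos=0$ and $\Lneg\le 0$ there. Choosing $C=2R^{2-\alpha}$ therefore gives $\partial_t\psi=C\ge\Linf\psi$ everywhere. Clearly $\psi\ge0$ on $\Rnp$, since $t\ge T_0$, and $\psi=0$ only at $\xi_0$. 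Away from $\mathcal{B}_\delta(\xi_0)$ either $t-T_0\ge\delta/2$ or $|x-x_0|\ge\delta/2$. This gives $\psi\ge\min(C\delta/2,\delta^2/4,R^2)>0$.

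\emph{Side points} $\xi_0=(x_0,t_0)$, $x_0\in\partial E$, $t_0\in(T_0,T]$. Here the time derivative of something like $k(t-t_0)^2$ is negative for $t<t_0$, so the spatial part must have strictly negative $\Linf$. I will use $f(x)=\min(|x-x_0|^\gamma,R^\gamma)$ with a small exponent $0<\gamma<\alpha$ and $R>\textup{diam}(E)$. Put $d=|x-x_0|>0$ and argue as in the proof of Lemma \ref{lem:mincone}, restricting to the line through $x_0$ and $x$.

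\begin{itemize}
\item The supremum term is at most $d^{\gamma-\alpha}a(\gamma)$, where $a(\gamma)=\sup_{r>1}(r^\gamma-1)/(r-1)^\alpha$. Truncation at $R$ only lowers this term.
\item The infimum term equals $-d^{\gamma-\alpha}b(\gamma)$, where $b(\gamma)=\sup_{0<r<1}(1-r^\gamma)/(1-r)^\alpha\ge 1$ (take $r=0$). Points closer to $x_0$ than $x$ are unaffected by the truncation.
\end{itemize}

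Since $r^\gamma-1\le\gamma\log r$, one gets $a(\gamma)\to0$ as $\gamma\to0$. This is the key estimate, and it is the step that requires care: one needs a uniform bound on $\sup_{r>1}\log r/(r-1)^\alpha$, which holds because $\alpha>0$. So for $\gamma$ small, $a(\gamma)<1/2$, and
\[
\Linf f(x)\le -\tfrac12 d^{\gamma-\alpha}\le -\tfrac12\textup{diam}(E)^{\gamma-\alpha}=:-c_0<0
\]
for $x\in E$.

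Then set $\psi(x,t)=f(x)+k(t-t_0)^2$ with $2k(T-T_0)\le c_0$. In $\cyl$ we get $\partial_t\psi\ge -c_0\ge\Linf\psi$. The function $\psi$ is $C^1$ in $\cyl$ because $x_0\notin E$, so the singularity of $|x-x_0|^\gamma$ lies outside the domain. Moreover $\psi\ge0$, $\psi(\xi_0)=0$, and $\psi$ is bounded below away from $\xi_0$ by $\min(\delta^\gamma 2^{-\gamma},k\delta^2/4,R^\gamma)$.

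The corner points $\partial E\times\{T_0\}$ are covered by the bottom construction, and points with $t_0=T$ on $\partial E$ are covered by the side construction. The main obstacle is the sign computation $a(\gamma)<b(\gamma)$ with a quantitative gap that survives truncation. I would verify it first, and if the limiting argument $a(\gamma)\to0$ proves awkward, I would instead bound $a(\gamma)$ by splitting at $r=2$.
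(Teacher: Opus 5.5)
Your construction is the same as the paper's. Both use the bottom barrier $\min(|x-x_0|^2,R^2)+C(t-T_0)$, with $C$ dominating the bound from Lemma \ref{lem:mincone}, and the side barrier $\min(|x-x_0|^\gamma,R^\gamma)+k(t-t_0)^2$ with $k$ small. Your monotonicity argument that a $C^1$ classical supersolution is a viscosity supersolution fills in a step the paper only asserts. The one real difference is the side estimate. The paper gets $\Linf\varphi^\beta\le -C(\alpha,\beta)|x-x_0|^{\beta-\alpha}$ for every $\beta<\alpha$ by citing Lemma 8.3 of \cite{CLM2012}. You prove it yourself for small $\gamma$ via $a(\gamma)\to 0$, and that step, as written, is wrong. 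The inequality $r^\gamma-1\le\gamma\log r$ is reversed: since $e^s-1\ge s$, one has $r^\gamma-1=e^{\gamma\log r}-1\ge\gamma\log r$. So it gives a lower bound on $a(\gamma)$, not the upper bound you need.

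The conclusion is still true and easy to repair. One fix: $r^\gamma-1=\int_0^\gamma r^s\log r\,ds\le\gamma r^\gamma\log r$ for $r>1$. For $\gamma\le\alpha/2$ the quantity $\sup_{r>1}r^{\alpha/2}\log r/(r-1)^\alpha$ is finite, because the ratio is bounded near $r=1$ and tends to $0$ as $r\to\infty$. This gives $a(\gamma)\le C_\alpha\gamma$.

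Alternatively, you can avoid taking $\gamma$ small. On $(1,2]$, concavity gives $r^\gamma-1\le\gamma(r-1)$, so the ratio is at most $\gamma<1$. On $[2,\infty)$, strict subadditivity of $s\mapsto s^\gamma$ gives $r^\gamma-1<(r-1)^\gamma$, so the ratio is $<(r-1)^{\gamma-\alpha}\le 1$ and tends to $0$ at infinity; hence its supremum there is $<1$. Therefore $a(\gamma)<1\le b(\gamma)$ for every $\gamma<\alpha$, which is exactly the estimate the paper imports from \cite{CLM2012}.

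Your fallback of splitting at $r=2$ still needs such a tail argument, since $r^\gamma-1\le\gamma(r-1)$ alone does not control large $r$. With either fix, the rest of your proof goes through: the choice $2k(T-T_0)\le c_0$, the use of $d\le\textup{diam}(E)$, and the positive lower bounds away from $\xi_0$.
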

\begin{proof}
    We first consider the boundary points $(x_0,t_0) \in \overline{E} \times \{T_0 \}$. For these points, we define the function 
   \begin{align}\label{eq:barrierbottomcyl}
      \psi(x,t)= \varphi(x)+ k_{\alpha}(t-T_0), 
    \end{align}
 where
$$
\varphi(x)=  \min(|x-x_0|^2, R^2),
$$
and $k_{\alpha}>0, R \in \R$ are constants that will be defined later. From Lemma \ref{lem:mincone} we recall that 
    \begin{align*}
     \Linf \min(|x-x_0|^2, R^2)= 
                      \left(R+|x-x_0| \right) \left(R-|x-x_0| \right)^{1-\alpha} - |x-x_0|^{2-\alpha} h(\alpha)
     \end{align*} 
     at each $x \in B_R(x_0,t_0)$ and  $1<h(\alpha)\leq2$ for $0<\alpha \leq 1$. We choose $R> \textup{diam}(E)+1 $ and calculate
     \begin{align*}
         \Linf \min(|x-x_0|^2, R^2) &= 
                      \left(R+|x-x_0| \right) \left(R-|x-x_0| \right)^{1-\alpha} - |x-x_0|^{2-\alpha} h(\alpha)\\
                      &\leq \left(R + \textup{diam}(E) \right) R^{1-\alpha}:=c_{\alpha}.
     \end{align*}
     We choose $k_{\alpha} \geq c_{\alpha}$ so that
     \begin{align*}
         \frac{\partial \psi}{\partial t}(x,t)-\Linf \psi(x,t) \geq k_{\alpha}-c_{\alpha} \geq 0,
     \end{align*}
 which proves that $\psi$ is a continuously differentiable supersolution, In addition, it satisfies all the conditions of Definition \ref{def:barrier} and is therefore a barrier.
    
    Next we consider $(x_0,t_0) \in\partial E \times (T_0, T]$. We let $\beta < \alpha$ and define the function 
    \begin{equation}\label{eq:x0notinOx}
           \psi^{\beta}(x,t) = \varphi^{\beta}(x)+ (t-t_0)^2,
    \end{equation}
    where $\varphi^{\beta}(x)= \min(|x-x_0|^{\beta}, R^{\beta})$, and $k_{\beta}>0$ is a constant to be chosen. Clearly 
    $$
    \Lneg \varphi^{\beta}(x) \leq -|x-x_0|^{\beta-\alpha}.
    $$ 
    We next study $\Lpos \varphi^{\beta}(x)$, and note that in this case $x\neq x_0$ for $(x,t) \in \cyl$. By employing the same technique as in Lemma \ref{lem:mincone} we see that
    \begin{align*}
        \Lpos \varphi^{\beta}(x) &= \sup_{y \in \Rn, y\neq x_0} \frac{\min(|y-x_0|^{\beta}, R^{\beta})-|x-x_0|^{\beta}}{|y-x|^{\alpha}} \\
        &= \sup_{1<r\leq R/|x-x_0|}|x-x_0|^{\beta-\alpha}\frac{r^{\beta}-1}{(r-1)^{\alpha}}.
    \end{align*}
We are now in the exact same situation as in the proof of Lemma 8.3 in \cite{CLM2012} where it is shown that
\begin{equation*}
    \Linf \varphi^{\beta}(x) \leq -C(\alpha,\beta) |x-x_0|^{\beta-\alpha},
\end{equation*}
for a constant $C(\alpha, \beta)>0.$ Thus,
\begin{equation*}
    \frac{\partial \psi^{\beta}}{\partial t}(x,t)-\Linf \psi^{\beta}(x,t) \geq 2k_{\beta}(t-t_0) +C(\alpha, \beta) |x-x_0|^{\beta-\alpha} \geq 0.
\end{equation*}
 Thus, $\psi^{\beta}$ is a continuous viscosity supersolution in $\cyl$ and satisfies all the criteria for being a barrier.
\end{proof}

We obtain the following existence result. The proof is the same as the proof of Theorem \ref{thm:existence}. 
\begin{cor}
    Let $0<\alpha<1$, $\Omega = \cyl$, and $g: D_{\cyl} \setminus \cyl \to \R$ be a continuous and bounded function with a limit at infinity. Then there exists a unique viscosity solution, $u:= \lps =\ups$, to \eqref{eq:Luf} such that $u=g$ on $D_{\cyl} \setminus \cyl $ and $\lim_{\xi \to \xi_0} u(\xi) = g(\xi_0)$ for each $\xi_0 \in  \partial \tilde{Q}_T$.
\end{cor}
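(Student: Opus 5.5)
The plan is to reduce the corollary to Theorem \ref{thm:existence}, applied with $\Omega=\cyl$. For a cylinder we have $T_0=\inf\{t:(x,t)\in\cyl\}$ and $T=\sup\{t:(x,t)\in\cyl\}$, so $D_{\cyl}=\Rn\times[T_0,T]$. The boundary $\partial\cyl$ splits into three parts: the bottom $\overline{E}\times\{T_0\}$, the lateral part $\partial E\times(T_0,T]$, and the top $E\times\{T\}$.

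The first step is to check that $\partial\tilde{Q}_T$ is precisely the set $\partial\tilde{\Omega}$ of Theorem \ref{thm:existence} when $\Omega=\cyl$. That set consists of all boundary points except possibly those with $t_0=T$. The top points $E\times\{T\}$ are excluded in both descriptions. The lateral top points $\partial E\times\{T\}$ are included in $\partial\tilde{Q}_T$, and including extra points only strengthens the hypothesis we must verify.

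The second step is to invoke Lemma \ref{lem:barrierscyl}. It gives a barrier in the sense of Definition \ref{def:barrier} at every point of $\partial\tilde{Q}_T$ for $0<\alpha\le 1$, and in particular for $0<\alpha<1$. The hypotheses on $g$ (continuous, bounded, with a limit at infinity) are exactly those of the theorem.

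The third step is to apply Theorem \ref{thm:existence} itself. Proposition \ref{prop:regular} makes every point of $\partial\tilde{Q}_T$ regular. Lemmas \ref{lem:supsubissub}, \ref{lem:Perronbdy} and \ref{lem:subissup} show that $(\lps)^*$ is a subsolution and $\lps$ a supersolution, both equal to $g$ off $\cyl$ with the correct boundary limits. The comparison principle, Proposition \ref{lem:compprinc}, then gives $(\lps)^*=\lps=\ups=:u$. Its boundary hypothesis is only required on $\partial\Omega\cap(\Rn\times[T_0,T))$, and all such points of $\partial\cyl$ lie in $\partial\tilde{Q}_T$.

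Uniqueness also comes from the comparison principle. Any other viscosity solution $w$ with $w=g$ on $D_{\cyl}\setminus\cyl$ and the same boundary limits can be compared in both directions with $u$. Condition \eqref{eq:limitatinf} holds because both functions equal $g$ for $|x|$ large, since $E$ is bounded, and $g$ has a limit at infinity.

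The main obstacle is the careful matching of the boundary sets and of the hypotheses of Proposition \ref{lem:compprinc}, in particular the requirement that the top lid need not carry boundary data. Beyond that, the corollary is a direct instance of the theorem.
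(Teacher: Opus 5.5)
Your proposal is correct and follows the paper's own route: the paper simply notes that Lemma~\ref{lem:barrierscyl} provides barriers at every point of $\partial\tilde{Q}_T$, that the proof of Theorem~\ref{thm:existence} then goes through verbatim, and that uniqueness comes from the comparison principle (Proposition~\ref{lem:compprinc}). Your additional checks are details the paper leaves implicit: the boundary hypothesis of the comparison principle only involves points with $t<T$, all of which lie in $\partial\tilde{Q}_T$, and \eqref{eq:limitatinf} holds because both functions equal $g$ outside the bounded set $E$.
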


\subsubsection{Barriers for the more general case}

\begin{lem}\label{lem:barriersgen}
    Let $0<\alpha \leq 1$. There exist barriers for the boundary points $\xi_0=(x_0,t_0) \in \partial \Omega$ such that
        \begin{enumerate}
        \item $(x_0, t_0) \in  \overline{E} \times \{T_0\}$,
        \item $(x_0, t_0) \in \partial E \times (T_0, T]$,
       % \item $(x_0, t_0) $ is such that there exists a ball $B_r(\xi_0)$ such that $B^-_r(\xi_0) \cap \Omega = \emptyset$, where $B^-_r(\xi_0)$ is defined in \eqref{eq:halfball}.
    \end{enumerate}
\end{lem}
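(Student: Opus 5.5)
The plan is to reduce to Lemma \ref{lem:barrierscyl}. Here $E$ is the projection $\On$ from \eqref{eq:projx}, so $\Omega \subset \cyl := E \times (T_0,T)$. A viscosity supersolution of \eqref{eq:Luf} in $\cyl$ is automatically one in $\Omega$: Definition \ref{def:viscsol} only tests at points of $\Omega$ with neighbourhoods $\Ns \subset \Omega \subset \cyl$. The nonlocal tail in $\varphi_N$ is the same global function in both settings. So I will reuse the cylinder barriers for $\cyl$, restricted to $D_\Omega = D_{\cyl}$ (the two sets coincide because $T_0$, $T$ are defined from $\Omega$). It then remains to check conditions 1--3 of Definition \ref{def:barrier} at the given $\xi_0 \in \partial\Omega$.

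\emph{Case 1: $(x_0,t_0)\in \overline{E}\times\{T_0\}$.} I take $\psi(x,t)=\min(|x-x_0|^2,R^2)+k_\alpha(t-T_0)$ with $R>\mathrm{diam}(E)+1$ and $k_\alpha \ge c_\alpha=(R+\mathrm{diam}(E))R^{1-\alpha}$. The bound from Lemma \ref{lem:mincone} gives $\partial_t\psi-\Linf\psi\ge 0$ in $\cyl$, hence in $\Omega$, and $\psi$ is $C^1$ there. Nonnegativity holds because $t\ge T_0$ on $D_\Omega$, and $\psi(\xi_0)=0$. For condition 2, $\psi(x,t)\ge \min(|x-x_0|^2,R^2)+k_\alpha(t-T_0)$ is bounded below by a positive constant whenever $|x-x_0|^2+(t-T_0)^2\ge\delta^2$: at least one of the two terms is then bounded below, and $|x-x_0|^2$ is capped by $R^2$ only at distances beyond $\delta$.

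\emph{Case 2: $(x_0,t_0)\in\partial E\times(T_0,T]$.} Since $E$ is open, $x_0\notin E$, so $x\neq x_0$ for every $(x,t)\in\Omega$. I take $\psi^\beta(x,t)=\min(|x-x_0|^\beta,R^\beta)+k_\beta(t-t_0)^2$ with $0<\beta<\alpha$. By the estimate quoted from Lemma 8.3 of \cite{CLM2012}, $\Linf\varphi^\beta(x)\le -C(\alpha,\beta)|x-x_0|^{\beta-\alpha}$ for $x\neq x_0$. Hence
\begin{equation*}
\partial_t\psi^\beta-\Linf\psi^\beta \ge 2k_\beta(t-t_0)+C(\alpha,\beta)|x-x_0|^{\beta-\alpha}.
\end{equation*}
This is the step needing care, because $t-t_0$ can be negative. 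Since $\beta-\alpha<0$ and $|x-x_0|\le \mathrm{diam}(\overline E)$ for $x\in E$, the second term is at least $C(\alpha,\beta)\,\mathrm{diam}(\overline E)^{\beta-\alpha}$. I therefore choose $k_\beta$ small enough that $2k_\beta(T-T_0)\le C(\alpha,\beta)\,\mathrm{diam}(\overline E)^{\beta-\alpha}$. The function is smooth near points of $\Omega$, so the pointwise inequality gives the viscosity inequality. Alternatively, test functions touching from below can be compared directly via the nonlocal monotonicity, as in Remark \ref{rmk:strict_touching}. Conditions 1 and 3 are immediate. Condition 2 follows as in Case 1 from $|x-x_0|^\beta$ and $(t-t_0)^2$ jointly, with $R$ chosen larger than $\mathrm{diam}(E)$.

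The main obstacle I expect is exactly this sign issue in Case 2, together with making sure the constant from \cite{CLM2012} is uniform in $x$. That requires the cap $R$ not to interfere: take $R\ge 2\,\mathrm{diam}(E)+1$, so that for $x\in E$ the supremum in $\Lpos\varphi^\beta$ behaves as in the uncapped case. Everything else is inherited verbatim from the cylinder.
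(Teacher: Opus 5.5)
Your proposal is correct and follows the paper's route: the paper's proof simply reuses the cylinder barriers $\psi$ from \eqref{eq:barrierbottomcyl} and $\psi^{\beta}$ from \eqref{eq:x0notinOx}, as you do. Your extra details fill in steps the paper leaves implicit. First, a supersolution in $E\times(T_0,T)$ is also one in $\Omega$, because $D_{\Omega}$ is unchanged. Second, $k_\beta$ must be taken small, e.g.\ $2k_\beta(T-T_0)\le C(\alpha,\beta)\,\mathrm{diam}(E)^{\beta-\alpha}$, to cover $t<t_0$; the paper asserts $2k_\beta(t-t_0)+C(\alpha,\beta)|x-x_0|^{\beta-\alpha}\ge 0$ without justification.
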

\begin{proof}
 For the first case we can use the function $\psi^{\alpha}$ in \eqref{eq:barrierbottomcyl} and for the second case we can use the function $\psi^{\beta}$ in \eqref{eq:x0notinOx}.

\begin{comment}

For 3) we only need to prove that there exists a barrier $\varphi$ for $\xi_0$ in $B_r(\xi_0)$ and then use the function $\psi$ in \eqref{eq:semibarrier} for $D=B_r(\xi_0)$. For this we can use the function 
    \begin{align*}
      \varphi(x,t)=          
      \begin{cases}
           f(x)+ t-t_0   & \textup{if  } t \geq t_0,\\
              f(x)+ t_0-t     & \textup{if  } t<t_0.
        \end{cases}
    \end{align*}
where
$$
f(x)=  \min(|x-x_0|^{\alpha}, R^{\alpha}),
$$
where $R$ is so large that $B_{2r}(x_0) \subset \subset B_R(x_0)$. Using the same argument as when proving that the bottom boundary points of the cylinder is regular it is clear that $\varphi$ is a continuous supersolution for $(x,t)\in \Omega\cap B_r(\xi_0)$ and it clearly satisfies all the conditions for being a barrier.
\end{comment}
\end{proof}

\appendix

\section{Appendix}
\begin{lem}\label{lem:simple}
             Let $0<\alpha \leq 1$ and 
     $$
     h(\alpha) = \left(1+\frac{\alpha}{2-\alpha} \right)\left(1-\frac{\alpha}{2-\alpha} \right)^{1-\alpha}.
     $$
     Then $1<h(\alpha)\leq 2$.
     \end{lem}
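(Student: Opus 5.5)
We handle the endpoint $\alpha=1$ separately and treat $0<\alpha<1$ through the auxiliary function $g(r)=(1-r)^{1-\alpha}(1+r)$ already used in the proof of Lemma \ref{lem:mincone}. Set $s=\frac{\alpha}{2-\alpha}$. For $0<\alpha<1$ we have $0<s<1$, and by definition $h(\alpha)=g(s)=(1+s)(1-s)^{1-\alpha}$.

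\emph{Upper bound.} Since $0<s<1$ and $1-\alpha\ge 0$, we have $0<(1-s)^{1-\alpha}\le 1$ and $1+s<2$. Hence $h(\alpha)=(1+s)(1-s)^{1-\alpha}<2$.

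\emph{Lower bound.} From the computation in the proof of Lemma \ref{lem:mincone},
\begin{equation*}
g'(r)=(1-r)^{-\alpha}\bigl(\alpha-(2-\alpha)r\bigr),
\end{equation*}
which is strictly positive for $0\le r<s$, because there $(2-\alpha)r<\alpha$. So $g$ is strictly increasing on $[0,s]$, and therefore $h(\alpha)=g(s)>g(0)=1$. This also justifies the claim in Lemma \ref{lem:mincone} that $r=s$ is the maximum of $g$ on $(0,1)$: $g$ increases on $(0,s)$ and decreases on $(s,1)$, since $g'<0$ there.

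\emph{The case $\alpha=1$.} Here $s=1$, so the factor $(1-s)^{1-\alpha}$ is the indeterminate form $0^0$. Consistent with the proof of Lemma \ref{lem:mincone}, where the supremum for $\alpha=1$ is computed directly as $\sup_{0<r<1}(1+r)=2$, we interpret $h(1)=2$, either as $0^0=1$ or as the limit $\alpha\to1^-$. Then $1<h(1)=2\le 2$. The only delicate point in the whole argument is this convention at $\alpha=1$; everything else is elementary monotonicity.
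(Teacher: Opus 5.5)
Your proof is correct, but it takes a different route from the paper.

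The paper treats $h$ as a function of the parameter $\alpha$. It writes $h(\alpha)=(1+f(\alpha))\exp\bigl((1-\alpha)\ln(1-f(\alpha))\bigr)$ with $f(\alpha)=\frac{\alpha}{2-\alpha}$ and differentiates in $\alpha$. The terms containing $f'(\alpha)$ cancel, because $\alpha+f(\alpha)(\alpha-2)=0$: this is an envelope-type cancellation, since $f(\alpha)$ is exactly the critical point of $g$. What remains is $h'(\alpha)=-(1-f)^{1-\alpha}(1+f)\ln(1-f)>0$. Both bounds then follow from monotonicity between the endpoint values $h(0)=1$ and $h(1)=2$.

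You instead fix $\alpha$ and work in the variable $r$. The lower bound comes from $g$ being increasing on $[0,s]$, and the upper bound from the factorwise estimate $(1+s)(1-s)^{1-\alpha}<2$.

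Your argument is more elementary and reuses only the $r$-derivative already computed in Lemma \ref{lem:mincone}. For $0<\alpha<1$ it also does not need the left-continuity of $h$ at $\alpha=1$ (that $(1-f(\alpha))^{1-\alpha}\to 1$ as $\alpha\to 1^-$), which the paper's monotonicity argument uses without stating. In addition, your sign analysis of $g'$ on both sides of $s$ shows directly that $g(s)$ is the maximum of $g$ on $(0,1)$, which is what Lemma \ref{lem:mincone} actually uses this lemma for.

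The paper's route gives the extra, unneeded fact that $h$ is strictly increasing in $\alpha$. Both arguments rely on the same convention $h(1)=2\cdot 0^0=2$ at the endpoint.
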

     \begin{proof}
             We note that $h(0)=1$ and $h(1)=2$. We next write $h(\alpha)$ as
    \begin{equation*}
       h(\alpha)= \left(1+f(\alpha)\right) \exp((1-\alpha)\ln(1-f(\alpha))),
    \end{equation*}
    where $f(\alpha)= \frac{\alpha}{2-\alpha}$. Then 
    \begin{align*}
        h'(\alpha) &= \exp((1-\alpha)\ln(1-f(\alpha))) \left(f'(\alpha)-\left(1+f(\alpha)\right)\left(\ln(1-f(\alpha)) +\frac{1-\alpha}{1-f(\alpha)}f'(\alpha) \right) \right)\\
        &= \exp((1-\alpha)\ln(1-f(\alpha))) \left(\frac{f'(\alpha)}{1-f(\alpha)}\underbrace{(\alpha +f(\alpha)(\alpha-2))}_{=0} -(1+f(\alpha))\ln(1-f(\alpha))\right)\\
        &= - \exp((1-\alpha)\ln(1-f(\alpha))) (1+f(\alpha))\ln(1-f(\alpha)).
    \end{align*}
    Since $0<1-f(\alpha)<1$ for $0<\alpha<1$ it follows that $h'(\alpha) >0$ for $0<\alpha<1$ which concludes the proof of the lemma.
     \end{proof}
 The following lemma is a standard result and can be found in Theorem 4.2 in \cite{K2004B} or Lemma 4.15 in \cite{FR2022}.
\begin{lem}\label{lem:strictmax}
    Let $\mathcal{A}$ be a family of upper semicontinuous functions. We let 
    \begin{equation*}
        u(x,t):= \sup_{v \in \mathcal{A}} v(x,t),
    \end{equation*}
and define $u^*$ as in \eqref{eq:u_upp}. We consider a point $(x_0, t_0) \in \Omega.$ Let $\Ns $ be a neighborhood of $(x_0,t_0)$ such that $(x_0,t_0)\in \Ns \subset \Omega$. Furthermore, let $\varphi \in C^1(\Ns)$ be a function that touches $u^*$ strictly from above at $(x_0,t_0)$. Then there exists a neighborhood $Q_r= B_r(x_0,t_0) \times (t_0-r,t_0+r)$ of $(x_0,t_0)$ such that $Q_r \subset \subset \Ns$, and a sequence of points $(x_k,t_k) \in Q_r$, such that $(x_k,t_k)\to (x_0, t_0)$, and functions $u_k \in \mathcal{A}$ such that 
    \begin{equation}\label{eq:supoverN}
         \sup_{Q_r} u_k-\varphi = u_k(x_k,t_k)-\varphi(x_k,t_k),
    \end{equation}
     and $u_k(x_k,t_k) \to u^*(x_0, t_0)$.
\end{lem}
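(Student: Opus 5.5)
This is the standard "maximum of subsolutions" lemma, which the paper cites as Theorem 4.2 in \cite{K2004B} and Lemma 4.15 in \cite{FR2022}. I would prove it with a compactness argument that exploits the strict touching.

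First, shrink the neighbourhood. Choose $r>0$ so small that $\overline{Q_{2r}}\subset\subset \Ns$ and $\varphi$ is continuous on $\overline{Q_{r}}$. Strict touching means $u^*-\varphi<0$ on $\overline{Q_r}\setminus\{(x_0,t_0)\}$ and $=0$ at $(x_0,t_0)$. Since $u^*-\varphi$ is upper semicontinuous, for each $\rho\in(0,r)$ the set $K_\rho=\overline{Q_r}\setminus Q_\rho$ is compact. Hence
\[
\eta_\rho:=-\max_{K_\rho}(u^*-\varphi)>0 .
\]

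Second, pick the approximating functions. By the definition of $u^*$ as the upper envelope of $u=\sup_{\mathcal A}v$, there are points $(y_k,s_k)\to(x_0,t_0)$ and functions $u_k\in\mathcal A$ with $u_k(y_k,s_k)\to u^*(x_0,t_0)$. Concretely, first choose $(y_k,s_k)$ with $u(y_k,s_k)>u^*(x_0,t_0)-1/k$, then choose $u_k$ within $1/k$ of the supremum at that point. Each $u_k$ is upper semicontinuous and $u_k\le u\le u^*$, so $u_k-\varphi$ attains its maximum over the compact set $\overline{Q_r}$ at some $(x_k,t_k)$.

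Third, show $(x_k,t_k)\to(x_0,t_0)$ and the value convergence. Continuity of $\varphi$ gives
\[
\max_{\overline{Q_r}}(u_k-\varphi)\ge u_k(y_k,s_k)-\varphi(y_k,s_k)\to 0 .
\]
On the other hand, on $K_\rho$ we have $u_k-\varphi\le u^*-\varphi\le-\eta_\rho$. So for $k$ large the maximizer lies in $Q_\rho$. Since $\rho$ is arbitrary, $(x_k,t_k)\to(x_0,t_0)$, and in particular $(x_k,t_k)\in Q_r$ (open) eventually. Passing to the tail, we may take the sup over $Q_r$ rather than $\overline{Q_r}$, which gives \eqref{eq:supoverN}.

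Finally, for the convergence of values,
\[
0\leftarrow u_k(y_k,s_k)-\varphi(y_k,s_k)\le u_k(x_k,t_k)-\varphi(x_k,t_k)\le u^*(x_k,t_k)-\varphi(x_k,t_k)\le 0 .
\]
So $u_k(x_k,t_k)-\varphi(x_k,t_k)\to0$, and continuity of $\varphi$ yields $u_k(x_k,t_k)\to\varphi(x_0,t_0)=u^*(x_0,t_0)$.

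The only delicate point is the localization of maximizers. It relies on strict touching together with upper semicontinuity of $u^*-\varphi$ on the compact annulus, which is what makes $\eta_\rho$ strictly positive. I would check carefully that $u^*$ is the upper semicontinuous envelope in the space-time sense, which is needed for $\max_{K_\rho}$ to be attained. Note that \eqref{eq:u_upp} writes $B_\epsilon$ but should mean a space-time neighbourhood; I would use $\mathcal{B}_\epsilon$ or $Q_\epsilon$.
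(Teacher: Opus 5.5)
Your proof is correct and follows essentially the same route as the paper: strict touching plus upper semicontinuity of $u^*-\varphi$ gives a positive gap on a compact annulus, and an approximating sequence with $u_k(z_k)\to u^*(x_0,t_0)$ forces the maximizer of $u_k-\varphi$ into the inner neighbourhood, where the sandwich $u_k(z_k)-\varphi(z_k)\le u_k(x_k,t_k)-\varphi(x_k,t_k)\le u^*(x_k,t_k)-\varphi(x_k,t_k)\le 0$ yields the value convergence. The only minor difference is that you use the gap $\eta_\rho$ for every $\rho$ to get convergence of the whole sequence of maximizers, whereas the paper fixes one inner set $N_\rho$ and identifies subsequential limits via upper semicontinuity of $u^*$; your explicit use of compactness of $\overline{Q_r}$ to produce the maximizer also fills in a step the paper leaves implicit.
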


\begin{proof}     
     We take any $Q_r \subset \subset \Ns$  and choose $N_{\rho} \subset \subset Q_r$ such that $(x_0,t_0) \in N_{\rho}$. We next define
    \begin{equation}\label{eq:supoutside}
        \sup_{Q_r \setminus N_{\rho}} u^*-\varphi = -s.
    \end{equation}
    The supremum exists since $u^*$ is upper semicontinuous and $\varphi$ is continuous. Furthermore, $s>0$ since the touching is strict. Because otherwise there exists a sequence of points $\{\tilde{z}_j\} \in Q_r \setminus N_{\rho}$ such that $u^*(\tilde{z}_j)-\varphi(\tilde{z}_j) \to 0$. However, $u^*-\varphi$ is upper semi continuous and this implies that, for a subsequence, $\tilde{z} = \lim_{j \to \infty }\tilde{z}_j$, we have that
    $$
    0 = \limsup_{\tilde{z}_j \to \tilde{z}} u^*(\tilde{z}_j)-\varphi(\tilde{z}_j) \leq u^*(\tilde{z})-\varphi(\tilde{z}).
    $$
    which is a contradiction since $ u^* < \varphi$ in $\overline{Q_r} \setminus N_{\rho}$. Due to the upper semicontinuity of $u$, we can choose points $z_k \to (x_0,t_0)$ such that $u(z_k) \geq u^*(x_0,t_0)-\frac{1}{k}$. From the definition of $u$ there exists a $u_k \in \mathcal{A}$ such that $u(z_k) \leq u_k(z_k)+ \frac{1}{k}$. Thus,
    \begin{equation}\label{eq:ukgreat}
        u_k(z_k)\geq u^*(x_0,t_0) -\frac{2}{k}.
    \end{equation}
    We choose $k$ so large that $3/k < s$ and
    \begin{equation}\label{eq:varphigreat}
        |\varphi(z_k)-\varphi(x_0,t_0)| < 1/k.
    \end{equation}
    By combining \eqref{eq:ukgreat} and \eqref{eq:varphigreat} and using that $u^*(x_0,t_0)=\varphi(x_0,t_0)$, we see that
    \begin{equation}\label{eq:ukvarphi}
        u_k(z_k)-\varphi(z_k) > - \frac{3}{k} >-s.
    \end{equation}
    Thus, if we combine \eqref{eq:supoutside} and \eqref{eq:ukvarphi} it follows that
    \begin{equation}
         \sup_{Q_r \setminus N_{\rho}} u_k-\varphi < u_k(z_k)-\varphi(z_k).
    \end{equation}
    Thus, for large values of $k$ there exists a $y_k \in N_{\rho}$ such that \eqref{eq:supoverN} holds.
   For a subsequence we have that
   $y = \lim_{k \to \infty} y_k$ for some $y \in \overline{N_{\rho}}$. Thus,
   $$
  0\leq u_k(z_k)-\varphi(z_k)+\frac{3}{k} \leq  u_k(y_k)-\varphi(y_k)+\frac{3}{k}  \leq u^*(y_k)-\varphi(y_k)+\frac{3}{k},
   $$
   where we have used \eqref{eq:ukvarphi} in the first inequality. We let $k\to \infty$ and use that $u^*$ is upper semicontinuous to see that $u^*(y)-\varphi(y) \geq0$. Thus, $(x_0,t_0)=y$, and $u_k(x_k,t_k) \to u^*(x_0,t_0)$. 
\end{proof}

%\begin{proof}[Proof of first part of Proposition \ref{prop:regular}]
 
%\end{proof}

\bibliographystyle{abbrv}
\bibliography{bibliography}

\end{document}